\declaretheorem[numberwithin=section]{theorem}
\declaretheorem{claim}
\declaretheorem[numberlike=theorem]{lemma,proposition,problem,corollary,conjecture}
\newcommand{\qqed}{{\renewcommand\qedsymbol{$\vartriangleleft$}\qed}}
\newcommand{\Fc}{\mathcal{F}}
\newcommand{\Gc}{\mathcal{G}}
\newcommand{\Lc}{\mathcal{L}}
\newcommand{\Tc}{\mathcal{T}}
\newcommand{\Wc}{\mathcal{W}}
\newcommand{\hash}{\mathbin\#}
\DeclarePairedDelimiter{\abs}{\lvert}{\rvert}
\DeclareMathOperator{\Forb}{Forb}
\title{On the joint embedding property for cographs and trees}
\author{Daniel Carter}
\date{}
\begin{document}

\maketitle

\begin{abstract}
A family of graphs $\Fc$ is said to have the \textit{joint embedding property} (JEP) if for every $G_1,G_2\in \Fc$, there is an $H\in \Fc$ that contains both $G_1$ and $G_2$ as induced subgraphs. If $\Fc$ is given by a finite set $S$ of forbidden induced subgraphs, it is known that determining if $\Fc$ has JEP is undecidable. We prove that this problem is decidable if $P_4\in S$ and generalize this result to families of rooted labeled trees under topological containment, bounded treewidth families under the graph minor relation, and bounded cliquewidth families under the induced subgraph relation.
\end{abstract}

\section{Introduction}

Graphs (including trees) in this paper are finite, and undirected graphs are simple. Suppose $(\Fc,\le)$ is a partially ordered set. Then $\Fc$ is said to have the \textit{joint embedding property} (JEP) if for every $X_1,X_2\in \Fc$, there is a $Y\in \Fc$ with $X_1\le Y$ and $X_2\le Y$. We say $X$ is \textit{$Y$-free} if $Y\not\le X$, and for a set $S\subseteq \Fc$, $X$ is said to be \textit{$S$-free} if $X$ is $Y$-free for all $Y\in S$. We write $\Forb_\le(S)$ to denote the set of $S$-free $X$. Note that $\Forb_\le(S)$ is still a partially ordered set under $\le$.

We consider the following computational problem:
\begin{problem}\label{prob:jep}
Let $(\Gc,\subseteq)$ be the family of graphs partially ordered by the induced subgraph relation. Given a finite set of graphs $S$, does $\Forb_\subseteq(S)$ have JEP?
\end{problem}
We say that $X_1,X_2\in \Fc$ form an \textit{$\Fc$-bad pair} (or just ``bad pair'' if $\Fc$ is understood) if there is no $Y\in \Fc$ that contains both $X_1$ and $X_2$. Obviously $\Fc$ has JEP if and only if there are no bad pairs.

In~\cite{undecidable}, it was shown that Problem~\ref{prob:jep} is undecidable. In contrast, we prove that this problem is decidable if $P_4\in S$, where $P_4$ is the path graph with four vertices. $P_4$-free graphs are also known as \textit{cographs}. The question of the decidability of JEP for subclasses of cographs was raised by Lozin \cite{question}. The crucial fact used in the proof is that $P_4$-free graphs are determined by their ``cotree'' structure, also known as modular decomposition. In fact, we actually prove a much more general result about tree languages, which we now describe.

From now on, unless specified otherwise, when we write ``tree'' we mean ``finite, rooted, labeled, binary tree''. Say that a tree $T$ contains $U$ \textit{topologically} if $U$, including the choice of root, may be obtained from a sequence of edge contractions and vertex/edge deletions in $T$, where the label of the new vertex representing a contracted edge is the label of the parent (this is the usual notion of topological containment of rooted labeled trees). Alternatively, we may ask for an injection $\phi:V(U)\hookrightarrow V(T)$ such that the paths in $T$ from $\phi(u)$ to $\phi(v)$ are edge-disjoint for every edge $uv\in E(U)$, contracting all such paths to edges and deleting all vertices not on any of these paths results in $U$, and $v$ is a descendent of $u$ in $U$ if and only if $\phi(v)$ is a descendent of $\phi(u)$ in $T$.

\begin{problem}\label{prob:treejep}
Let $(\Tc_k,\preceq)$ be the set of trees partially ordered by topological containment, where there are $k$ possible labels. Given a finite set of trees $S$, does $\Forb_\preceq(S)$ have JEP?
\end{problem}
Our main theorem, proved in Section~\ref{sec:main}, is the following:
\begin{theorem}\label{thm:trees}
Problem~\ref{prob:treejep} is decidable. In fact, it is decidable to determine if $(L,\preceq)$ has JEP for any regular language of trees $L$, if a tree automaton recognizing $L$ is given as input.
\end{theorem}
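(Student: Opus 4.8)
The plan is to reduce the JEP question for a regular tree language to a structural property of a single derived regular language that can be decided by inspecting an automaton. Throughout I will use that topological containment $\preceq$ on finite rooted labeled binary trees with a fixed label set is a well-quasi-order: this is the rooted, labeled form of Kruskal's tree theorem, and (using the embedding description of $\preceq$) a topological minor of a binary tree is again binary, so this subclass is downward closed and inherits the WQO. Two effective consequences matter: for each fixed tree $U$ the up-set $\{T : U \preceq T\}$ is a regular tree language with a computable automaton (topological-minor testing); and the finite set of minimal elements of any regular upward-closed set of trees is computable, because a tree on more than $g(q)$ nodes ($q$ the number of automaton states, $g$ computable) has a pumpable context, so the basis is bounded in size and located by finite search. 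Given $S$, the language $\Forb_\preceq(S)$ is regular with a computable automaton (intersect the complements of the up-sets of the members of $S$), so Problem~\ref{prob:treejep} is an instance of the second assertion of Theorem~\ref{thm:trees}; I therefore work with an arbitrary regular $L$ (nonempty; otherwise JEP holds trivially).

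First I would prove the equivalence: $L$ has JEP if and only if its downward closure $C := \{X : X \preceq Y \text{ for some } Y \in L\}$ is directed, i.e.\ any two members of $C$ have a common $\preceq$-upper bound inside $C$ --- equivalently, since $C$ is downward closed and nonempty, $C$ is a single ideal of the WQO $(\Tc_k,\preceq)$. For the forward direction, push a joint upper bound of $Z_1,Z_2\in C$ up into $L$; for the converse, take $Y_1\succeq Z_1$ and $Y_2\succeq Z_2$ in $L$ and a joint extension of $Y_1,Y_2$ in $L$. Next, $C$ is itself a computable regular tree language: one builds a nondeterministic automaton that reads $X$ and guesses how to ``grow'' it --- subdividing edges, hanging side-subtrees, extending below leaves and above the root --- into an accepting run of the automaton for $L$. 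Thus everything reduces to deciding, given an automaton, whether the regular downward-closed language $C$ is an ideal.

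For this crux I would develop two routes. The first bounds the minimal elements $(a,b)$ of the upward-closed set $\mathrm{Bad}\subseteq\Tc_k\times\Tc_k$ of pairs with no common $\preceq$-upper bound in $C$; since $\mathrm{Bad}$ is upward closed, $L$ has JEP if and only if for every such minimal $(a,b)$ at least one of $\{X\in L:a\preceq X\}$, $\{X\in L:b\preceq X\}$ is empty --- finitely many emptiness tests. To bound $(a,b)$: if $(a,b)$ is minimal in $\mathrm{Bad}$ and $a$ is large it has a long root-to-leaf path; for any one-step shrinking $a'\prec a$ the pair $(a',b)$ is good, witnessed by some $Y'\in L$ that may be taken of size bounded in $|a'|,|b|$ and the automaton; a repeated automaton-state along the image of the long path in $Y'$ produces a loop of $Y'$ into which the part of $a$ that was removed can be re-inserted while staying in $L$, yielding a member of $L$ above both $a$ and $b$ --- contradicting $(a,b)\in\mathrm{Bad}$. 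So minimal bad pairs have computably bounded size and can be enumerated. The delicate point --- which I expect to be the main obstacle --- is matching the re-inserted piece of $a$ with a loop of $Y'$: making this effective requires a Ramsey/idempotent argument along the path of $a$ to extract a genuinely repeatable block (with care about side-subtrees), rather than an appeal to the WQO, whose bounds are not effective. The second route avoids bad pairs altogether: ideals of $(\Tc_k,\preceq)$ have finite descriptions, from an automaton for a downward-closed regular language one can compute its canonical decomposition into finitely many incomparable ideals, and $C$ is an ideal precisely when this decomposition is a single ideal (equivalently, one of the ideals contains all the others, decidable from the descriptions). Either way one obtains decidability of JEP for arbitrary regular $L$, and hence for Problem~\ref{prob:treejep}.
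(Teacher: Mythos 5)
Your high-level plan is sound and parallels the paper in spirit: reduce JEP for a regular tree language $L$ to bounding and then enumerating a finite set of minimal bad/semibad pairs, or equivalently to deciding whether the downward closure of $L$ is an ideal. The reformulation ``$L$ has JEP iff the downward closure $C$ of $L$ is directed'' is correct, and the closure properties you invoke (regularity of up-sets of a fixed tree, of $\Forb_\preceq(S)$, and of downward closures) are all available. The problem is that both of your routes leave the crux unargued.

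In Route~1, the step you yourself flag as ``the main obstacle'' is exactly where the proof must live, and it does not reduce to an off-the-shelf Ramsey or idempotent argument on automaton states. If $(a,b)$ is minimal bad, $a'\prec a$ a one-step shrinking, and $Y'\in L$ a small witness for $(a',b)$, then pumping a repeated state along some path of $Y'$ gives you a larger tree in $L$ that still contains $a'$ and $b$, but there is no reason the inserted block reproduces the deleted piece of $a$: the deleted piece and the pumped context are chosen by two unrelated processes. What is needed is a notion of ``type'' on trees, richer than the automaton state, such that (i) two trees of the same type support exactly the same set of trees of bounded size above them inside $L$, and (ii) the number of types is computable so that the type is pumpable. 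That is precisely what the paper's ``tree walk'' construction supplies: it records not a single state but an entire tree of SCCs of a derived line-graph-like object, together with which gluing/union transitions are realizable, and the hard work in the paper is the inductive proof that a tree $f_{n,n}(W)$ built from a tree walk $W$ realizes all small trees $U$ with $W\in\Wc_U$. Nothing in your sketch supplies or replaces that invariant, so Route~1 as written is an outline, not a proof. In Route~2 you appeal to an effective canonical ideal decomposition of a downward-closed regular tree language under topological containment. Deciding whether such a $C$ is a single ideal is essentially equivalent to deciding absence of bad pairs in $C$, so absent a concrete citation and a proof that the decomposition is computable from a tree automaton (rather than, say, from a finite basis of the complement, which already requires the bound you are trying to prove), this route is circular in the relevant sense. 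The paper itself notes (Proposition on implicit regularity) that the bare wqo argument yields only implicit regularity and ``uses essentially no information about $L$ at all''; making it effective is the whole content of the theorem, and your proposal does not yet discharge that.
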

This also includes, as an easy consequence, deciding JEP for regular languages of strings under the relation $a\preceq b$ if $a$ is a (not necessarily contiguous) substring of $b$, generalizing the main result of \cite{subword}, which proved this result for string languages defined by (finitely many) forbidden strings, a special case of regular languages of strings. We prove this result separately, also in Section~\ref{sec:main}, as a stepping stone to the full result.

Our main theorem has a number of corollaries, proved in Section~\ref{sec:cor}. First, the restriction to binary trees is not necessary:
\begin{corollary}\label{cor:generaltrees}
Let $(\Tc_k',\preceq)$ be the set of rooted, labeled, not necessarily binary trees partially ordered by topological containment, where there are $k$ possible labels. It is decidable to determine if $\Forb_\preceq(S)$ has JEP for any finite set $S\subseteq\Tc_k'$.
\end{corollary}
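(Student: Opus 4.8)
The plan is to reduce the case of arbitrary rooted labeled trees to the binary case already handled by Theorem~\ref{thm:trees}, by exhibiting an encoding of $\Tc_k'$ into a regular language of binary trees that is compatible with topological containment. First I would fix the standard ``left-child, right-sibling'' encoding: given $T\in\Tc_k'$, build a binary tree $\beta(T)$ whose vertex set is $V(T)$, where the left child of $v$ in $\beta(T)$ is the first child of $v$ in $T$ and the right child of $v$ in $\beta(T)$ is the next sibling of $v$ in $T$ (padding with a fresh ``null'' label, so $k+1$ labels in total, to make the tree fully binary if the model requires it). This map is injective, and its image $L_\beta$ is easily seen to be regular: a bottom-up tree automaton can check that null-labeled vertices have only null descendants and that the original-label vertices form an initial segment along every all-right path, etc.

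The key step is to prove the equivalence $U\preceq T$ in $\Tc_k'$ if and only if $\beta(U)\preceq\beta(T)$ in the binary model. The forward direction is the delicate one: a topological embedding of $U$ into $T$ gives an injection on vertices with edge-disjoint connecting paths respecting the descendant order, and I must translate a parent-edge of $U$ (which becomes a left-edge at the point of attachment in $\beta(U)$) and a sibling relation (a right-edge in $\beta(U)$) into edge-disjoint monotone paths in $\beta(T)$. The point is that ``$v$ is a descendant of $u$ in $T$'' corresponds exactly to ``$v$ is reachable from $u$ by a path that goes left at least once and then is unconstrained'' in $\beta(T)$, while ``sibling'' corresponds to an all-right path; one checks these path families remain edge-disjoint under the encoding because distinct tree-edges of $T$ map to edge sets meeting only at shared endpoints. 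The reverse direction is similar but easier, since a monotone embedding of $\beta(U)$ into $\beta(T)$ must send null vertices to null vertices (labels are preserved) and hence restricts to an embedding of the original trees.

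Granting this equivalence, the corollary follows formally: $\Forb_\preceq(S)$ in $\Tc_k'$ has JEP if and only if the regular language $L_\beta\cap\Forb_\preceq(\beta(S))$ (where $\beta(S)=\{\beta(s):s\in S\}$, now a finite set of binary trees) has JEP, because $\beta$ is an order-isomorphism from $\Tc_k'$ onto $L_\beta$ and therefore restricts to an order-isomorphism $\Forb_\preceq(S)\cong L_\beta\cap\Forb_\preceq(\beta(S))$, and JEP is an isomorphism invariant. Since $L_\beta$ is regular and $\Forb_\preceq(\beta(s))$ is regular for each fixed binary tree $\beta(s)$ (the set of trees not topologically containing a fixed tree is regular, by a standard product/subset construction on the obvious nondeterministic ``pattern-matching'' automaton), their intersection is regular and a recognizing automaton can be computed from $S$; Theorem~\ref{thm:trees} then decides JEP for it. The main obstacle I anticipate is purely bookkeeping in the forward direction of the embedding equivalence — getting the edge-disjointness and the descendant/sibling correspondence exactly right under the encoding — rather than anything conceptually deep.
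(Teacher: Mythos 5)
Your plan is structurally right — encode general trees as binary trees in a way compatible with topological containment and then invoke the binary case (the paper uses Lemma~\ref{lem:main}, since the image is a proper sublanguage and not of the form $\Forb_\preceq(\cdot)$, but that is the same idea as your intersection with $L_\beta$) — but the specific encoding you chose does not work, and the step you flagged as ``purely bookkeeping'' is in fact where it breaks.

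The left-child/right-sibling map $\beta$ is not monotone for $\preceq$. Concretely, take $T$ with root $r$ (label $1$) and two leaf children $a$ (label $1$) and $b$ (label $2$), and take $U$ with root $r'$ (label $1$) and a single leaf child $b'$ (label $2$). Then $U\preceq T$ (delete $a$). Under LCRS, $\beta(T)$ makes $a$ the left child of $r$ and $b$ the \emph{right} child of $a$, while $\beta(U)$ makes $b'$ the \emph{left} child of $r'$. For $\beta$ to be injective you must distinguish left from right (say by folding an $\{L,R\}$ marker into the label, as the paper's footnote suggests), and then $\beta(U)\not\preceq\beta(T)$: no vertex of $\beta(T)$ carries the label $(L,2)$. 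The forward direction of your claimed equivalence is therefore false. The underlying reason is that in $(\Tc_k',\preceq)$ the children of a vertex form an unordered set and any subset may be deleted, but LCRS serializes the siblings into an ordered rightward chain, so deleting one sibling shifts the positions of the others; a fixed $L/R$ tagging cannot absorb that shift. (If you instead drop the $L/R$ distinction to match the paper's symmetric-automaton model, the encoding fails in the other way: a vertex with two children and a vertex with an only child that itself has an only child both encode to an unordered path of length two, so $\beta$ is no longer injective with respect to $\preceq$, and you again lose the equivalence.)

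The paper avoids this by a different encoding: a vertex $u$ with $x$ children is replaced by a chain of $x-1$ vertices, the first keeping $u$'s label and the remaining $x-2$ carrying a single fresh ``spine'' label $\uparrow$, with the $x$ children attached one per chain vertex (two to the last). Because the binary children are unordered and all the $\uparrow$-vertices are interchangeable, deleting or contracting a child of $u$ corresponds exactly to deleting or contracting a spine vertex, so this encoding \emph{does} respect $\preceq$ in both directions; and the image (for a fixed finite forbidden set $S$) is explicitly regular, so Lemma~\ref{lem:main} applies. If you want to salvage your writeup, you should swap in this ``comb'' encoding; the rest of your reduction (intersect with the image language, invoke the decidability result for regular binary tree languages) is then fine.
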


Additionally, the main theorem may be applied to decide if a given subclass of cographs has JEP under the induced subgraph relation:
\begin{corollary}\label{cor:p4free}
There is an algorithm to decide Problem~\ref{prob:jep} for $S$ with $P_4\in S$.
\end{corollary}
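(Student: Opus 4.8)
The plan is to translate Problem~\ref{prob:jep} for such $S$ into an instance of (the tree-language version of) Theorem~\ref{thm:trees}. Recall that a cograph $G$ on at least two vertices is uniquely encoded by its \emph{canonical cotree} $c(G)$: a rooted labeled tree whose leaves are the vertices of $G$, whose internal nodes carry labels in $\{\cup,\times\}$ (``parallel'' and ``series''), such that every internal node has at least two children and no internal node shares a label with its parent; a single-vertex graph is encoded by a single leaf. Thus $c$ is a bijection from cographs to the set $\mathcal{C}$ of \emph{canonical cotrees} over a fixed label alphabet (parallel, series, leaf). The first, easy step is to normalize $S$: if $Y\in S$ is not a cograph and $Y\ne P_4$, then since $P_4\in S$ every member of $\Forb_\subseteq(S)$ is a cograph and hence cannot contain $Y$, so $\Forb_\subseteq(S)=\Forb_\subseteq(S\setminus\{Y\})$. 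Iterating, we may assume $S=\{P_4\}\cup S_0$ with $S_0$ a finite set of cographs, and then $\Forb_\subseteq(S)$ is precisely the set of cographs that contain no member of $S_0$ as an induced subgraph.

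The central step is the claim that $c$ is an order isomorphism: for cographs $G,H$ one has $H\subseteq G$ (as an induced subgraph) if and only if $c(H)\preceq c(G)$ (topologically). This is essentially the theory of modular decomposition. In one direction, an induced subgraph of $G$ is obtained by restricting to a set $W$ of leaves of $c(G)$, and its canonical cotree is obtained from the minimal subtree of $c(G)$ spanning $W$ by deleting the other leaves and then repeatedly suppressing any internal node with a single child or sharing a label with its parent — which is exactly an instance of topological containment under the paper's convention that a contracted edge inherits the \emph{parent}'s label. In the other direction, a topological embedding $\phi$ of $c(H)$ into $c(G)$ lets us pick, for each leaf $x$ of $c(H)$, a leaf $\ell_x$ of $c(G)$ at or below $\phi(x)$; the descendant-preservation condition and edge-disjointness of the embedded paths force the $\ell_x$ to be distinct and force the lowest common ancestor in $c(G)$ of $\ell_x,\ell_{x'}$ to be the $\phi$-image of the lowest common ancestor of $x,x'$, whence $\{\ell_x\}$ induces a copy of $H$ in $G$. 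Checking this equivalence carefully — in particular that the parent-label convention and the descendant condition precisely match the union/join alternation, and handling the single-vertex base cases — is the main obstacle; it is, however, exactly the ``crucial fact'' about cotrees advertised in the introduction.

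With the order isomorphism in hand, $c$ carries $\Forb_\subseteq(S)$ bijectively onto the set $L\subseteq\mathcal{C}$ of canonical cotrees that topologically avoid every tree in $S_0':=\{c(Y):Y\in S_0\}$, and it does so in an order-preserving and order-reflecting way. Consequently $\Forb_\subseteq(S)$ has JEP (under $\subseteq$) if and only if $(L,\preceq)$ has JEP (under topological containment): a common induced-subgraph superstructure for $G_1,G_2$ inside $\Forb_\subseteq(S)$ corresponds under $c$ to a common topological supertree for $c(G_1),c(G_2)$ inside $L$, and conversely. The set $\mathcal{C}$ of canonical cotrees is defined by a local (alternation, arity, leaf) condition and is therefore a regular tree language, and for each fixed tree $U$ the set of trees that topologically contain $U$ is recognized by a bottom-up tree automaton that tracks, at each node, which rooted subtrees of $U$ embed below it; intersecting these finitely many automata with one for $\mathcal{C}$ yields, effectively, a tree automaton recognizing $L$.

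Finally, feeding this automaton into Theorem~\ref{thm:trees}, in the form extended to not-necessarily-binary rooted labeled trees exactly as in the proof of Corollary~\ref{cor:generaltrees} (the reduction there encodes a non-binary tree as a binary one and sends regular tree languages to regular tree languages while preserving topological containment), decides whether $(L,\preceq)$ has JEP, and hence whether $\Forb_\subseteq(S)$ has JEP. This gives the desired algorithm. The only genuinely delicate point is the cotree order-isomorphism of the second paragraph; the remaining steps are routine translations and invocations of the main theorem.
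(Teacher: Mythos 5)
Your proposal is correct and follows essentially the same route as the paper: translate $P_4$-free graphs into canonical cotrees, invoke the fact (cited in the paper to an existing reference) that $G_1\subseteq G_2$ iff the cotree of $G_1$ is topologically contained in the cotree of $G_2$, observe that the language of cotrees of $S$-free cographs is explicitly regular, and then apply Theorem~\ref{thm:trees} via the general-tree reduction of Corollary~\ref{cor:generaltrees}. The paper compresses this to a few sentences (and outsources the cotree order-isomorphism to a citation), while you sketch that isomorphism and spell out the normalization of $S$ to $\{P_4\}\cup S_0$ with $S_0$ a set of cographs; these are useful details the paper implicitly relies on, not a different argument.
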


We also show that it is decidable to determine if bounded treewidth families have JEP under the graph minor relation:
\begin{corollary}\label{cor:tw}
If $(\Gc, \preceq)$ is the set of graphs under the minor relation, and $S$ is a (finite) set of graphs such that every graph in $\Forb_\preceq(S)$ has treewidth at most some constant $k$, then it is decidable to determine if $\Forb_\preceq(S)$ has JEP.
\end{corollary}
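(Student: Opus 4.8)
The plan is to encode bounded-treewidth graphs as labeled binary trees, transfer the JEP question to this encoding, and then invoke the machinery of Theorem~\ref{thm:trees}. First I would make the treewidth bound explicit. A minor-closed class has bounded treewidth if and only if it excludes some planar graph, and $\Forb_\preceq(S)$ excludes exactly those graphs having a member of $S$ as a minor; hence $\Forb_\preceq(S)$ has bounded treewidth if and only if $S$ contains a planar graph (if $G\in S$ is planar then $G$ itself is excluded; conversely an excluded planar graph has a minor in $S$, which must then be planar). I would therefore first test whether $S$ has a planar member; if not, the hypothesis of the corollary is vacuous, and if so, I would combine the classical fact that a planar graph on $p$ vertices is a minor of an $O(p)\times O(p)$ grid with the excluded-grid theorem (in any effective form) to compute an explicit $k$ with $\operatorname{tw}(G)\le k$ for every $G\in\Forb_\preceq(S)$. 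Fixing such a $k$, I would take the standard finite signature whose terms are width-$\le k$ tree decompositions (equivalently, $k$-boundaried-graph expressions), binarized so that the set $\Wc$ of well-formed terms is a regular language of rooted labeled binary trees, with surjective evaluation map $\mu\colon\Wc\to\{G:\operatorname{tw}(G)\le k\}$.

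Next I would translate the problem to $\Wc$. For each fixed graph $H$, the property ``$H$ is a minor of $\mu(t)$'' is expressible in monadic second-order logic on $t$ (existentially quantify the $\abs{V(H)}$ branch sets and assert that each is connected and that the required pairs are adjacent), so by Courcelle's theorem $\{t\in\Wc:H\preceq\mu(t)\}$ is regular, with an automaton computable from $H$. Intersecting complements over $H\in S$ yields a regular language $L:=\{t\in\Wc:\mu(t)\text{ is }S\text{-free}\}$, computable from $S$, with $\mu(L)=\Forb_\preceq(S)$ by the choice of $k$. Define a quasi-order $\sqsubseteq$ on $\Wc$ by $s\sqsubseteq t$ iff $\mu(s)\preceq\mu(t)$. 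Since $\mu$ restricts to an order-preserving, order-reflecting surjection from $(L,\sqsubseteq)$ onto $(\Forb_\preceq(S),\preceq)$, it is immediate that $(\Forb_\preceq(S),\preceq)$ has JEP if and only if $(L,\sqsubseteq)$ does; so it suffices to decide JEP for $(L,\sqsubseteq)$.

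I would then run the decision procedure behind Theorem~\ref{thm:trees}, with topological containment replaced by $\sqsubseteq$. The key observation is that $\sqsubseteq$, as an order on the regular language $L$, enjoys the same effective properties that the proof of Theorem~\ref{thm:trees} uses for topological containment. First, for a fixed $t$ the up-set $\{s\in L:t\sqsubseteq s\}=\{s\in L:\mu(t)\preceq\mu(s)\}$ is effectively regular, by the Courcelle argument above applied to the fixed graph $\mu(t)$. Second, for a fixed $t$ the down-set $\{s\in L:s\sqsubseteq t\}$ is effectively regular and in fact equals the union, over the finite and computable set of minors $G$ of $\mu(t)$, of the (effectively regular) languages $\{s\in L:\mu(s)\cong G\}$, using that ``$\mu(s)\cong G$'' is monadic second-order for each fixed $G$. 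Third, $\sqsubseteq$ is a well-quasi-order on $L$: an infinite $\sqsubseteq$-bad sequence would, under $\mu$, produce an infinite $\preceq$-bad sequence of graphs, contradicting the graph-minor theorem (or, since we work within bounded treewidth, the older well-quasi-ordering of bounded-treewidth graphs under minors). Granting that these are the only features of the order used in the proof of Theorem~\ref{thm:trees}, the same algorithm decides JEP for $(L,\sqsubseteq)$, which proves the corollary.

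I expect the main obstacle to be precisely that last ``granting'': I would need to reread the proof of Theorem~\ref{thm:trees} and check that it is genuinely order-agnostic --- that it uses no property of topological containment beyond effective well-quasi-ordering and the effective regularity and computability of up-sets and down-sets of singletons, and in particular exploits no shape-correlation between a tree and its topological minors, a correlation that $\sqsubseteq$ lacks (two terms encoding minor-related graphs may have entirely unrelated shapes). A secondary, routine but necessary point is the uniform bookkeeping in the Courcelle-theorem translations, ensuring that every automaton above is produced effectively from the input $S$.
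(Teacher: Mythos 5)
Your reduction in the first half --- binarized width-$k$ tree-decomposition terms, a regular language $L$ of such terms evaluating (via $\mu$) onto $\Forb_\preceq(S)$, and the observation that $\mu$ is an order-preserving, order-reflecting surjection --- is sound. The problem is the second half, and it is a genuine gap, not a technicality to ``check later'': the proof of Theorem~\ref{thm:trees} is \emph{not} order-agnostic. You replace topological containment by the pulled-back order $s\sqsubseteq t \iff \mu(s)\preceq\mu(t)$, and then assert that the decision procedure behind Theorem~\ref{thm:trees} only needs (a) effective regularity of up- and down-sets of singletons and (b) well-quasi-ordering. But the argument in the paper uses far more than these abstract properties. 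The whole machinery --- the graph $\widehat M$, its strongly connected components, tree walks $W_Z$, and especially the inductive construction of the trees $f_{n,m}(W)$ in the proof of the analogue of Claim~1 --- depends on the \emph{locality} of topological containment: a topologically contained tree traces out edge-disjoint paths inside the host, so containment can be analysed vertex-by-vertex via the automaton state and the current SCC. You correctly identify that $\sqsubseteq$ lacks exactly this shape-correlation (two terms evaluating to minor-related graphs can have entirely unrelated tree shapes), and that is precisely why the construction does not transfer. There is no version of Lemma~\ref{lem:main}/\ref{lem:main2} in the paper for an arbitrary effectively-presented wqo on a regular tree language, and proving one would be a substantial new result --- indeed, that is roughly the content of the paper's (stated as very tentative) Conjecture~\ref{conj:wqo}, not something you can invoke.

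The paper avoids this by \emph{not} changing the order: it keeps topological containment $\preceq$ on the tree encodings of tree decompositions, constructs the regular language $L$ of encodings of width-$\le k$ tree decompositions of $S$-minor-free graphs (via Courcelle, as you also note), and then applies Lemma~\ref{lem:main} directly to $(L,\preceq)$. The burden that this shifts --- and which your route sidesteps at the cost of the gap above --- is showing that JEP for $(L,\preceq_{\mathrm{topo}})$ is equivalent to JEP for $(\Forb_\preceq(S),\preceq_{\mathrm{minor}})$; that is, one must argue that topological containment of encoded tree decompositions implies minor containment of the decoded graphs, and that whenever two $S$-minor-free graphs have a common $S$-minor-free super-graph one can realize this at the level of tree-decomposition encodings. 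That compatibility argument, rather than a re-run of Theorem~\ref{thm:trees} under a new order, is the missing ingredient; if you want to repair your write-up, replace the ``order swap'' with a proof of this compatibility and then cite Lemma~\ref{lem:main} as stated.
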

The condition that $\Forb_\preceq(S)$ has bounded treewidth is equivalent to $S$ containing a planar graph \cite{treewidth}. It is also decidable to determine if bounded treewidth families have JEP under the induced subgraph relation, and even more strongly:
\begin{corollary}\label{cor:cw}
If $S$ is a given finite set of graphs and $k$ is a given integer, then it is decidable to determine if the set of $S$-free graphs with cliquewidth at most $k$ has JEP under the induced subgraph relation.
\end{corollary}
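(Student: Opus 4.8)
The plan is to reduce Corollary~\ref{cor:cw} to Theorem~\ref{thm:trees} by encoding bounded-cliquewidth graphs as labeled trees. Recall that a graph $G$ has cliquewidth at most $k$ if it can be constructed by a $k$-expression: a term over the operations of creating a single vertex with one of $k$ labels, disjoint union $\oplus$, relabeling $\rho_{i\to j}$ (change all label-$i$ vertices to label $j$), and adding all edges between label-$i$ and label-$j$ vertices $\eta_{i,j}$. Such a $k$-expression is naturally a finite rooted labeled tree over a finite alphabet (the set of operation symbols for fixed $k$), and after binarizing the $\oplus$ nodes it becomes a binary tree in $\Tc_m$ for a suitable $m=m(k)$. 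So the set of all $k$-expressions is a regular tree language $E_k$; in fact one can build an explicit tree automaton for it.

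The key step is to handle two subtleties. First, the map from $k$-expressions to graphs is many-to-one: many terms yield the same graph, and we must ensure this does not break the translation of JEP. This is fine because JEP is a statement about the partial order of graphs, and we should work instead with the following relation on $E_k$: declare $t\preceq t'$ when the graph $G(t)$ is an induced subgraph of $G(t')$. The obstacle is that this relation on terms is \emph{not} the topological-containment order on trees, so Theorem~\ref{thm:trees} does not apply directly. The resolution is to observe that ``$G(t)$ is an induced subgraph of $G(t')$ and both are $S$-free'' can itself be recognized: build a tree automaton (running on pairs, i.e.\ on the product encoding of $t$ and $t'$, or better, use the fact that induced-subgraph containment of graphs of bounded cliquewidth is expressible in monadic second-order logic $\mathrm{MSO}_1$) and then invoke the general decidability machinery. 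Concretely, I would phrase the whole problem in $\mathrm{MSO}_1$ over the class of graphs of cliquewidth $\le k$: being $S$-free is $\mathrm{MSO}_1$-expressible (forbidding each $H\in S$ as an induced subgraph is a first-order sentence), and JEP fails iff there exist two $S$-free graphs $G_1,G_2$ of cliquewidth $\le k$ with no $S$-free common induced-supergraph of cliquewidth $\le k$; by Courcelle's theorem the class of such graphs, encoded via $k$-expressions, forms a regular tree language, and one then runs the JEP-decision algorithm of Theorem~\ref{thm:trees} on that language.

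In more detail, the steps are: (1) fix $k$ and $S$; let $\mathcal{C}$ be the class of $S$-free graphs of cliquewidth $\le k$; (2) show $\mathcal{C}$, presented by its $k$-expressions, is a regular tree language $L_\mathcal{C}\subseteq\Tc_m$, using that ``cliquewidth $\le k$'' is witnessed by the $k$-expression itself and ``$S$-free'' is an $\mathrm{MSO}_1$ property that, via Courcelle's theorem for cliquewidth, pulls back to a regular property of $k$-expression terms; (3) define on $L_\mathcal{C}$ the relation $t\preceq_{\mathrm{is}} t'$ iff $G(t)$ is an induced subgraph of $G(t')$, and observe this is $\mathrm{MSO}_1$-definable on the disjoint union / on the $2$-sorted structure, hence (again by Courcelle) recognizable by a tree automaton on a product alphabet; (4) note that $(\mathcal{C},\text{induced subgraph})$ has JEP iff $(L_\mathcal{C},\preceq_{\mathrm{is}})$ has JEP as a quasi-ordered tree language — since $G(t)=G(t')$ need not imply $t=t'$, one works with the quotient, but JEP is invariant under passing to a quotient by order-equivalence; (5) finally apply the proof technique behind Theorem~\ref{thm:trees}, which (I would check from Section~\ref{sec:main}) does not actually use the specific shape of topological containment beyond its automaticity — what is really needed is that the order relation is recognized by a tree automaton and that the language is regular. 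If the proof of Theorem~\ref{thm:trees} is stated only for $\preceq$ = topological containment, I would instead extract from it the more general statement ``JEP is decidable for any regular tree language equipped with an automatic partial (or quasi-) order'', which should be what the proof actually establishes.

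The main obstacle is step (5): ensuring that the algorithm of Theorem~\ref{thm:trees} is robust enough to accept an arbitrary automatic quasi-order in place of topological containment, and in particular that the finiteness/well-quasi-order arguments used to bound the search for bad pairs still go through. Topological containment on trees is a well-quasi-order (Kruskal's theorem), and if the proof of Theorem~\ref{thm:trees} leans on that, then I must instead argue that $\preceq_{\mathrm{is}}$ on bounded-cliquewidth graphs is a well-quasi-order — which is \emph{not} known in general — or else locate the genuinely used hypothesis, which I expect is weaker (e.g.\ that the order is recognizable and that ``having a common upper bound'' is a recognizable property of pairs, from which a direct automaton-emptiness argument suffices without invoking any wqo). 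Resolving exactly which hypothesis Theorem~\ref{thm:trees}'s proof needs, and verifying the cliquewidth encoding satisfies it, is the crux; everything else is routine translation via Courcelle's theorem.
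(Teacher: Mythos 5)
Your proposal diverges from the paper's argument in a substantial way. The paper's route is short: it encodes $S$-free graphs of cliquewidth at most $k$ by their parse trees, argues via Courcelle's theorem for cliquewidth that the resulting tree language $L$ is explicitly regular (precisely because ``$S$-free'' is $\text{MSO}_1$, as you also observe in your step~(2)), and then applies Lemma~\ref{lem:main} \emph{directly} to $L$ --- that is, it decides JEP of $L$ under ordinary topological containment of trees. It does not introduce a separate order $\preceq_{\mathrm{is}}$ on terms, does not pass to a tree-automatic or two-sorted encoding of induced-subgraph containment, and does not invoke any generalization of Theorem~\ref{thm:trees} to arbitrary recognizable quasi-orders. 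Your steps~(3)--(5) are therefore not what the paper does.

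The worry you raise in step~(5) is legitimate as a mathematical concern --- topological containment on parse trees is not literally the induced-subgraph relation on the encoded graphs, so applying Theorem~\ref{thm:trees} to $L$ requires some compatibility argument, and the paper is terse about it (``we omit a detailed construction''). But the remedy you propose is a genuine gap: there is no ``JEP is decidable for any regular tree language with an automatic quasi-order'' statement anywhere in the paper, and the proof of Lemma~\ref{lem:main2} is built hard around the specific geometry of topological containment. The directed graph $\widehat M$, its line graph, the distinction between gluing and union vertices, the tree walks, and the transformer/base-tree gluing construction in the proof of the key claim all exploit that $\preceq$ is realized by contracting vertex-disjoint paths of the host tree; none of this transfers to a relation given merely as an abstract tree-automatic or MSO-definable order. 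Asserting that the proof ``should be'' robust to that change, without having inspected it, is the part of your plan that would fail. What you actually need to supply is the piece the paper leaves implicit: an argument that JEP of $L$ under $\preceq$ (topological containment of parse trees) is equivalent to JEP of the graph class under induced subgraph --- for instance by working with a normalized subfamily of parse trees on which pruning to a vertex subset yields a topological subtree, and on which topological containment of trees in $L$ implies induced-subgraph containment of the corresponding graphs. That compatibility check, not a general automatic-order version of Theorem~\ref{thm:trees}, is the missing ingredient.
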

Note that $P_4$-free graphs are precisely the graphs with cliquewidth at most 2 \cite{cw}, so this also generalizes Corollary~\ref{cor:p4free}.

The main theorem is proved by exhibiting upper bounds on the size of ``minimal'' bad pairs, then using this information to construct a tree automaton that, roughly speaking, accepts a pair $(T_1, T_2)$ if and only if it is a bad pair. Standard techniques can then be used to determine if the set of bad pairs is empty or not. Moreover, we immediately obtain a linear time algorithm to check if a given pair is bad or not, though the hidden constants make this algorithm highly impractical.

The existence of this tree automaton shows that the set of bad pairs of a regular language of trees under topological containment itself forms a regular tree language in some sense. This fact can actually easily be proven by using the fact that trees are well-quasi-ordered (wqo) under topological containment, though this does not give an explicit construction of an automaton recognizing the desired language, so it does not actually prove Theorem~\ref{thm:trees}. Nevertheless, this suggests that there may be a deeper relationship between wqo families and the decidability of the joint-embedding property. We very tentatively conjecture the following:
\begin{conjecture}\label{conj:wqo}
Suppose that $(\Fc, \le)$ is wqo and there are algorithms to decide some basic problems related to $\le$, e.g.:
\begin{enumerate}
    \item ``Is $X\le Y$?''
    \item ``Given $X$, find the set of $Z$ such that $X\le Z$ and there is no $Y$ with $X\le Y\le Z$ except $X$ and $Z$.''\footnote{Note that the set of such $Z$ forms an antichain under $\le$, and is therefore finite by wqo.}
\end{enumerate}
Then the following problem is decidable: ``Given a finite set $S\subseteq \Fc$, does $\Forb_\le(S)$ have JEP?''
\end{conjecture}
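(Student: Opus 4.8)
The plan is to follow the template of the proof of Theorem~\ref{thm:trees}—bound the minimal bad pairs, build a recognizer for bad pairs, test emptiness—but carry it out abstractly, using only that $(\Fc,\le)$ is wqo together with oracles (1) and (2). Throughout, as is implicit in speaking of algorithms, I assume elements of $\Fc$ have finite descriptions and $\Fc$ is recursively enumerable; since $\le$ is decidable by oracle~(1), $\Forb_\le(S)$ is then recursively enumerable as well.

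Two consequences of wqo do the heavy lifting. First, for any $X_1,X_2$ the set $U(X_1,X_2):=\{Y\in\Fc:X_1\le Y\text{ and }X_2\le Y\}$ is an up-set, hence generated by a finite set $M(X_1,X_2)$ of minimal elements; since $\Fc\setminus\Forb_\le(S)=\bigcup_{s\in S}\{Z:s\le Z\}$, and a union $\bigcup_i\{Z:Y_i\le Z\}$ of principal up-sets is contained in $\bigcup_{s\in S}\{Z:s\le Z\}$ exactly when each $Y_i$ lies above some $s\in S$, a pair $(X_1,X_2)\in\Forb_\le(S)^2$ is a bad pair if and only if every $Y\in M(X_1,X_2)$ satisfies $s\le Y$ for some $s\in S$. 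Second, ordering pairs by $(X_1,X_2)\sqsubseteq(X_1',X_2')$ iff $X_1\le X_1'$ and $X_2\le X_2'$ gives a wqo on $\Forb_\le(S)^2$, and the bad pairs are $\sqsubseteq$-upward closed (any common upper bound of $(X_1',X_2')$ is one of $(X_1,X_2)$); hence the set of bad pairs, if nonempty, has finitely many $\sqsubseteq$-minimal elements, and JEP fails iff at least one exists.

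If $M(X_1,X_2)$ were computable, the first fact together with oracle~(1) would make ``is $(X_1,X_2)$ a bad pair?'' decidable, so ``JEP fails'' would be semidecidable by enumerating pairs; and a computable bound $N=N(S)$ on the size of some minimal bad pair whenever one exists would then let us decide JEP outright by inspecting all pairs up to size $N$. To compute $M(X_1,X_2)$ one is led to the recursion furnished by oracle~(2): if $X_2\le X_1$ return $\{X_1\}$; otherwise every $Y\in U(X_1,X_2)$ strictly dominates $X_1$ and hence dominates some cover $Z$ of $X_1$, while conversely each cover of $X_1$ feeds into $U(X_1,X_2)$, so $M(X_1,X_2)$ is the set of $\le$-minimal elements of $\bigcup_Z M(Z,X_2)$ taken over the finitely many covers $Z$ of $X_1$ returned by the oracle (and one may recurse symmetrically on $X_2$).

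The obstruction—and the reason the statement is only tentatively conjectured—is that neither effectivity input is supplied by wqo alone. A wqo has no infinite strictly descending chains and no infinite antichains, but it may well have infinite strictly ascending chains, so a branch $X_1<Z_1<Z_2<\cdots$ of the recursion above can run forever without ever reaching something above $X_2$; this already happens when $X_1,X_2$ have no common upper bound, and even when they do the ``useful'' branches can coexist with divergent ones, so there is no evident way to certify that $M(X_1,X_2)$ has been computed in full. Symmetrically, finiteness of the antichain of minimal bad pairs comes with no computable bound. I expect the gap can be closed only by strengthening the wqo hypothesis to an effective one—say, a computable bound on the length of controlled bad sequences, in the spirit of maximal-order-type or Friedman-style miniaturization arguments—or by a structural argument showing that a minimal bad pair, if one exists, must already occur among elements assembled from $S$ by finitely many iterated covers; the automaton-theoretic pumping that makes this last step work for Theorem~\ref{thm:trees} has, at present, no abstract counterpart, which is exactly where I expect the real difficulty to lie.
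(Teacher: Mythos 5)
The statement you were given is Conjecture~\ref{conj:wqo}, which the paper does \emph{not} prove; in Section~\ref{sec:more} the author explicitly states ``we have extremely low confidence in Conjecture~\ref{conj:wqo} as stated.'' So there is no proof in the paper to compare against, and you were right not to claim one. Your analysis is not a proof but a correct diagnosis of why no proof can be extracted from the stated hypotheses, and it tracks the paper's own discussion closely. Both you and the paper observe that wqo gives finiteness of minimal bad pairs (via the argument of Proposition~\ref{prop:implicit}) but supplies no computable bound on their size; both of you point to the need for some effective strengthening of wqo --- you propose bounds on controlled bad sequences in the spirit of Friedman-style miniaturization, while the paper mentions ``inlining'' a proof of Kruskal's theorem and notes skepticism because the bounds from Lemma~\ref{lem:main2} appear to have a different shape from TREE-type functions. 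Your additional observation that oracle~(2) does not suffice to compute the minimal common upper bounds $M(X_1,X_2)$ --- because the cover recursion can diverge along an infinite ascending chain, and wqo does not forbid those --- is a concrete and accurate instance of the obstruction, and it makes precise something the paper leaves at the level of ``the bounds seem to be of a different shape.'' In short: you correctly recognized the object is a conjecture, correctly identified where the natural approach breaks, and your proposed strengthening of the hypotheses is consonant with what the paper itself suggests would be needed.
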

There may be some other subproblems that require algorithms besides the two listed in order for this problem to be decidable. Theorem~\ref{thm:trees} shows that if $(\Fc,\le)$ can be computably embedded in $(\Tc_k, \preceq)$ for some $k$, i.e.\ if there is a computable function $\phi:\Fc\to \Tc_k$ such that $x\le y$ if and only if $\phi(x)\preceq \phi(y)$, then determining if $\Forb_\le(S)$ has JEP is decidable. Thus, most ``sufficiently simple'' wqo families satisfy this conjecture. A possible natural counterexample to any form of this conjecture is the set of graphs under the minor relation, which cannot be embedded into $\Tc_k$ for any $k$ (this is implied by results in \cite{meta}). We discuss this conjecture, and others, in Section~\ref{sec:more}.

\section{JEP for tree languages}
\label{sec:main}

In this section we prove Theorem~\ref{thm:trees}. We make heavy use of automata theory; in particular, the notions of (deterministic, bottom-up) \textit{tree automata} and \textit{regular tree languages}, also known as recognizable tree languages. We recommend referring to \cite{tata} for readers unfamiliar with these notions.

\subsection{Explicitly and implicitly regular families}

We distinguish between families of languages that are ``explicitly'' or ``implicitly'' regular: a family $\Lc$ of languages (of trees or strings) is said to be \textit{explicitly regular} if there is an algorithm that, given (a description of) $L\in\Lc$, constructs a finite automaton $M$ that accepts precisely $L$; and $\Fc$ is \textit{implicitly regular} if every $L\in\Lc$ is known to be regular but there is not necessarily an algorithm that produces a finite automaton recognizing $L$. These notions are actually different; for example:
\begin{proposition}
Let $T$ be a Turing machine and $L_T$ the language $\{1^n\mid T\text{ does not halt in time $\le n$}\}$. Then $\{L_T\}_T$ is an implicitly regular family that is not explicitly regular.
\end{proposition}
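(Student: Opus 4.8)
The plan is to establish two things: first, that each $L_T$ is regular (so the family is implicitly regular), and second, that no algorithm can produce automata for all of them (so the family is not explicitly regular).

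For the first part, I would observe that $L_T = \{1^n \mid T \text{ does not halt in time} \le n\}$ is a language over the unary alphabet $\{1\}$, and it is \emph{downward closed} under the prefix order: if $T$ does not halt in time $\le n$, then certainly it does not halt in time $\le m$ for any $m \le n$. Hence $L_T$ is either $\{1^n \mid n \le N\}$ for some finite $N$ (if $T$ halts, with $N+1$ the halting time) or all of $1^*$ (if $T$ never halts). In either case $L_T$ is finite or cofinite over a unary alphabet, hence regular. So $\{L_T\}_T$ is an implicitly regular family.

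For the second part, I would argue by contradiction: suppose there were an algorithm $A$ that, given (a description of) a Turing machine $T$, outputs a finite automaton $M_T$ recognizing $L_T$. Then I could decide the halting problem as follows. Given $T$, run $A$ to obtain $M_T$; then check whether $L(M_T)$ is finite or infinite — this is decidable for a given finite automaton (e.g.\ test whether $M_T$ accepts any string of length between $|Q|$ and $2|Q|$, by the pumping lemma). By the analysis above, $L_T$ is finite iff $T$ halts and infinite iff $T$ does not halt, so this would decide whether $T$ halts on the empty input, contradicting undecidability of the halting problem. Therefore no such algorithm $A$ exists, and $\{L_T\}_T$ is not explicitly regular.

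The only subtlety — really the crux — is being careful about what ``description of $L_T$'' means as input to the hypothetical algorithm: the natural description is just the machine $T$ itself, and the argument hinges on the fact that the \emph{syntactic} object (the code of $T$) is easy to produce while the \emph{semantic} object (a recognizing automaton, which encodes the halting time) is not. One should also note that this does not contradict the first part: regularity is a property of the language, not an effective one. No genuine obstacle arises here; the proof is a short diagonalization once the downward-closure observation is in place.
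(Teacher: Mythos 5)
Your proof is correct and follows essentially the same approach as the paper: first observing that $L_T$ is finite or all of $1^*$ depending on whether $T$ halts (hence regular in either case), and then reducing the halting problem to the hypothetical automaton-construction algorithm. The only cosmetic difference is that you test whether $L(M_T)$ is finite, whereas the paper tests whether $M_T$ rejects some string $1^n$; for these downward-closed unary languages the two conditions coincide and both are decidable from the automaton.
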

\begin{proof}
For any Turing machine $T$, $L_T$ either has finitely many strings in the case $T$ halts, or all strings of the form $1^n$ in the case $T$ does not halt. In either case, $L_T$ is regular, so $\{L_T\}_T$ is an implicitly regular family.

If there was an algorithm that produced, for any Turing machine $T$, a finite automaton recognizing $L_T$, then the halting problem would be decidable. Indeed, it is easy to determine if a given finite automaton rejects some string of the form $1^n$, and $T$ halts if and only if $L_T$ rejects some such string.
\end{proof}
Note that determining if $x\in L_T$ actually \textit{is} decidable for any string $x$: reject if $x$ is not of the form $1^n$; otherwise, simulate $T$ for $n$ steps to determine if $T$ halts in that time or not. We may also say, slightly informally, that a particular language $L$ is ``explicitly regular'' in the case there is an explicit construction of a finite automaton $M$ recognizing $L$.

We make use of the following standard or obvious results:
\begin{proposition}\label{prop:basic}
The following facts hold:
\begin{enumerate}
    \item The set of $S$-free trees is a regular tree language. In fact, $\{\Forb_\preceq(S)\}_{\text{finite }S\subseteq \Tc_k}$ is an explicitly regular family of tree languages.
    \item Complements, unions, and intersections of finitely many (explicitly) regular tree languages are (explicitly) regular tree languages.
    \item Consider regular tree languages over the alphabet $\Sigma$. Define $X\hash Y$, where $X$ and $Y$ are trees, to be the tree over the alphabet $\Sigma\sqcup\{\hash \}$ with root labeled $\hash $ that has child subtrees $X$ and $Y$. If $L_1,L_2$ are (explicitly) regular tree languages over $\Sigma$, then $L_1\hash L_2\coloneq\{X\hash Y\mid X\in L_1,Y\in L_2\}$ is (explicitly) regular over $\Sigma\sqcup\{\hash \}$.
\end{enumerate}
\end{proposition}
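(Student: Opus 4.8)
The plan is to establish the three items in order, with item~(1) carrying essentially all of the content and items~(2)--(3) being routine effective closure constructions for bottom-up tree automata.

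For item~(1) it suffices, by item~(2), to build for each single tree $U$ a deterministic bottom-up tree automaton $M_U$ recognizing $\{T : U\preceq T\}$, since $\Forb_\preceq(S)=\bigcap_{U\in S}\overline{\{T:U\preceq T\}}$. The automaton $M_U$ will process a tree $T$ from the leaves upward, storing at each node $v$ the set $B(v)\subseteq V(U)$ of those vertices $u$ for which the rooted subtree $U_u$ of $U$ consisting of $u$ and its descendants embeds topologically into the rooted subtree $T_v$ of $T$ rooted at $v$; there are at most $2^{\abs{V(U)}}$ such sets, so the state set is finite and depends only on $U$. The transition function computes $B(v)$ from $\operatorname{label}(v)$ and the sets $B(w)$ over the children $w$ of $v$ as follows: first compute the set $A(v)$ of vertices $u$ such that $U_u\preceq T_v$ with $u$ mapped to $v$ itself, which holds exactly when $\operatorname{label}(u)=\operatorname{label}(v)$ and the children of $u$ in $U$ can be assigned injectively to children $w$ of $v$ with each child $u'$ of $u$ lying in the corresponding $B(w)$; then set $B(v)=A(v)\cup\bigcup_w B(w)$. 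Here the label condition is exactly the constraint that edge contraction imposes on the image of $u$, the injectivity of the assignment is exactly what forces the embedding paths out of $v$ to be edge-disjoint, and the smaller embeddings recursively witnessed by the $B(w)$ live in disjoint subtrees of $T$ and so do not interfere with one another or with the fresh edges out of $v$. Finally declare $B$ accepting iff $\operatorname{root}(U)\in B$. Everything here is explicitly computable from $U$ (for binary trees the injective assignment ranges over a set of size at most two), so $M_U$, its complement, and the intersection over $U\in S$ are all constructed effectively, giving the claimed explicit regularity.

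Item~(2) is standard: the product construction on deterministic bottom-up tree automata yields union and intersection, and swapping the accepting and non-accepting states of a deterministic automaton yields complement (determinizing first by the subset construction if needed); all of these are effective and hence preserve explicit regularity. Item~(3) is likewise routine: given automata $M_1,M_2$ over $\Sigma$ for $L_1,L_2$, take the disjoint union of their state sets together with one new accepting state and one rejecting sink, keep all original transitions (none of which read $\hash$), send any $\hash$-labeled node whose two children are in accepting states of $M_1$ and $M_2$ respectively to the new accepting state, and send every other occurrence of $\hash$ (and any symbol read above the new accepting state) to the sink; this recognizes $L_1\hash L_2$ and is built effectively from $M_1$ and $M_2$.

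The main obstacle is the correctness of $M_U$ in item~(1): one must check carefully that the paper's notion of topological containment --- a freely placed image of $\operatorname{root}(U)$, edge-disjoint embedding paths, preservation of the descendant relation, and the parent-label rule under edge contraction --- is captured faithfully by the local transition described above, and in particular that ``distinct children of $v$'' is the right local surrogate for global edge-disjointness and that equality of labels is the right local surrogate for the contraction rule. Once this verification is in hand, items~(2) and~(3) add nothing beyond bookkeeping.
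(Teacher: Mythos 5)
The paper states Proposition~\ref{prop:basic} as ``standard or obvious'' and gives no proof, so your proposal is supplying content the paper omits rather than paralleling an existing argument. Your treatment of item~(1) is the right standard construction and you have identified the correct invariant: storing $B(v)=\{u\in V(U):U_u\preceq T_v\}$ and splitting it as $A(v)\cup\bigcup_w B(w)$ gives a well-defined bottom-up transition depending only on the node label and the children's states, and the two side conditions you flag (injective assignment of $U$-children to $T$-children as the local form of edge-disjointness, and $\operatorname{label}(u)=\operatorname{label}(v)$ as the local form of the parent-label contraction rule) do capture the paper's definition of $\preceq$ correctly for binary trees. Item~(2) is standard and fine.

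Item~(3) as written is not quite a valid construction of a \emph{deterministic} bottom-up automaton, which is the model the paper uses. If you take the disjoint union $\Sigma_1\sqcup\Sigma_2$ and ``keep all original transitions,'' then $n_0(\lambda)$ is forced to be both $m_0^{(1)}(\lambda)$ and $m_0^{(2)}(\lambda)$; that is a nondeterministic automaton that guesses at the leaves whether a subtree will eventually be the $L_1$-side or the $L_2$-side of the $\hash$. The idea is salvageable by determinizing (which you already invoke in item~(2)), but the cleaner fix is the product: use states $(\Sigma_1\times\Sigma_2)\cup\{a,s\}$, run $M_1$ and $M_2$ in parallel on every subtree over $\Sigma$, and at a $\hash$-labeled node send the pair of child states to $a$ precisely when one child is accepting for $M_1$ and the other for $M_2$ (in either order, respecting the symmetry assumption $m_2(x,y,z)=m_2(x,z,y)$), with everything else and everything above $a$ going to the sink $s$. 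With that adjustment the whole proposal is correct and effective as claimed.
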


The key lemma, which is in fact stronger than Theorem~\ref{thm:trees}, is the following, which we prove in the next subsection:
\begin{lemma}\label{lem:main}
Suppose $\Lc$ is an explicitly regular family of tree languages. Define
\[ B_L\coloneq\{X\hash Y\mid \text{$(X,Y)$ is an $L$-bad pair}\}. \]
Then $\{B_L\}_{L\in\Lc}$ is an explicitly regular family.
\end{lemma}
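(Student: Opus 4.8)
The plan is to build a tree automaton recognizing $B_L$ by combining three ingredients: a bound on the "size" of witnesses that certify a pair is *good* (i.e.\ joinable), a characterization of badness that only refers to bounded-size data, and the closure properties of Proposition~\ref{prop:basic}. The starting observation is that membership of $X\hash Y$ in $B_L$ is determined entirely by the behavior of the automaton $M$ for $L$ when run on $X$ and on $Y$ — more precisely, by the pair of states $(q_X,q_Y)$ that $M$ reaches at the roots of $X$ and $Y$ (together with the fact that $X,Y\in L$, i.e.\ that these are accepting states). Indeed, whether some $Z\in L$ contains both $X$ and $Y$ topologically depends only on how $X$ and $Y$ can sit inside a tree that $M$ accepts, and the interface between "$X$ sitting at some node of $Z$" and "the rest of $Z$" is exactly the state $M$ assigns at the top of (the homeomorphic image of) $X$. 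So the first step is to make this precise: define, for each state $q$ of $M$, the set $R_q$ of "root-types" realizable by subtrees whose root-state is $q$ — here a root-type should record, for each subset-closed piece of information needed to detect topological containment, whether that piece is present. The key finite object is: for a tree $X$, which trees $U$ (from a bounded family) embed topologically into $X$, and with what root-state; but since $U$ ranges over infinitely many trees, we instead track, for each state $q$ of $M$, whether $X$ embeds topologically into *some* tree reaching state $q$, i.e.\ the "shadow" of $X$ in $M$. This is a subset of the state set of $M$, hence a bounded amount of information, and it is computable by a tree automaton running bottom-up on $X$ (a subset-construction-style automaton).

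Concretely, the central claim to prove is: there is a tree automaton $M'$, explicitly constructible from $M$, whose states are (roughly) subsets of $Q(M)\times Q(M)$ — or a suitable refinement — such that when $M'$ is run on $X$ it computes the set $\Phi(X)=\{q\in Q(M)\mid \exists Z, M\text{ reaches }q\text{ at the root of }Z\text{ and }X\preceq Z\}$, and similarly on $Y$. Then $(X,Y)$ is a good pair iff there is a single state $q$ that is *accepting* in $M$ and lies in $\Phi(X)\cap\Phi(Y)$ — wait, this is not quite right, because a single $Z$ must contain *both* $X$ and $Y$, not one $Z$ for each. The fix, and the real content, is that $\Phi$ must be a function into subsets of $Q(M)$ that records *pairs*: define $\Psi(X)=\{(q,q')\mid \exists Z,\ M$ reaches $q$ at root of $Z$, and $X\preceq Z$ via an embedding whose topmost image-node carries $M$-state $q'\}$, or better, work with the automaton on the product and track which *pairs of trees* $(X,Y)$ can be simultaneously embedded — i.e.\ run a single automaton bottom-up on the two trees in tandem that maintains the set of pairs of $M$-states $(q_1,q_2)$ such that there exists a tree $Z$, accepted by $M$, containing $X$ below the current node-type and $Y$ below the current node-type simultaneously in an edge-disjoint way. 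This "simultaneous embedding" automaton is the heart of the argument.

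The key steps, in order: (1) Fix $M$ for $L\in\Lc$; WLOG $M$ is deterministic bottom-up. (2) Construct the auxiliary automaton $A$ that, run on a single tree $T$, computes at each node the set of $M$-states reachable by trees into which the subtree-below-that-node embeds topologically; argue $A$ is finite-state (states $\subseteq 2^{Q(M)}$) and explicitly constructible, using that topological embedding of rooted labeled trees is itself "automatic" — the set of $Z$ containing a fixed $U$ is regular, uniformly in a small automaton for $U$ — but since we cannot enumerate all $U$, we instead show the *shadow* $\Phi(\cdot)$ satisfies a bottom-up recursion: $\Phi$ of a node with label $\ell$ and child-shadows $\Phi_1,\Phi_2$ is computable from $\ell,\Phi_1,\Phi_2$ and the transition table of $M$, because topological containment either maps the root of the embedded tree strictly below (so it embeds into a child's shadow) or maps it to this node (and then the children of the embedded tree distribute edge-disjointly among this node's descendants, which after accounting for which $M$-states are reachable reduces to a combinatorial condition on $\Phi_1,\Phi_2$ and $M$'s transitions). (3) Upgrade $A$ to run on $X\hash Y$: on the $\hash$-node, combine $\Phi(X)$ and $\Phi(Y)$ to decide whether some common $Z\in L$ exists — this requires tracking *pairs* of states as above, so really step (2) should already be done at the level of pairs: build the automaton tracking $\{(q_1,q_2)\in Q(M)^2 : \exists Z\text{ with }M$-root-state governing $Z$, into which $X$ embeds reaching "exit-state" $q_1$ and $Y$ embeds reaching "exit-state" $q_2$, edge-disjointly$\}$; membership in $B_L$ is then the complement of "this set, evaluated at the root, contains a pair of states that combine to an accepting run of $M$". (4) Apply Proposition~\ref{prop:basic} (complementation and the $\hash$ construction) to conclude $B_L$ is explicitly regular; since every construction used only the table of $M$, the family $\{B_L\}_{L\in\Lc}$ is explicitly regular.

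The main obstacle — and where care is genuinely needed — is step (2)/(3): setting up the "simultaneous topological embedding" automaton so that the bottom-up recursion is genuinely finite-state. The subtlety is that topological containment contracts paths, so when the embedded tree's root maps to the current node, its children must be routed into *disjoint* subtrees hanging below, and we must track not just "which states are reachable" but enough about the *shape* of reachable trees to know that two children of $U$ can be embedded into *different* descendant branches of $Z$ (edge-disjointness). I expect the resolution is that $M$'s state at a node already summarizes everything about the subtree relevant to future embedding (that is the point of $M$ being an automaton), so the correct state space is subsets of $Q(M)$ (or $Q(M)^2$ for the pair version) describing which $M$-states can be "hit" at the exit of an embedding — and the edge-disjointness bookkeeping becomes: partition the children-requirements among the available descendant slots, which is decidable from the finite data. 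Making this recursion precise and proving its correctness (both directions of the "iff") is the technical core; everything else is assembling known closure properties.
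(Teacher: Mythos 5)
There is a genuine gap, and it is exactly where you flag uncertainty. You correctly observe that the naive per-tree summary $\Phi(X)=\{q\in Q(M)\mid \exists Z,\ M(Z)=q,\ X\preceq Z\}$ cannot work, because joinability needs a \emph{single} $Z$ containing both $X$ and $Y$. But the proposed fix does not repair this. The phrase ``run a single automaton bottom-up on the two trees in tandem'' is not available to you: in $X\hash Y$ the two subtrees are processed independently bottom-up and only meet at the $\hash$ node, so whatever state the automaton carries at the root of $X$ must be a function of $X$ alone, and likewise for $Y$. Your $\Psi$, as written, is a property of the \emph{pair} $(X,Y)$ relative to a common $Z$, and cannot be decomposed into a per-$X$ state and a per-$Y$ state combined by $m_2$. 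If you instead try to read $\Psi$ charitably as a per-tree invariant valued in $2^{Q(M)^2}$ (say, recording pairs of the form $(\text{final state of }Z,\ \text{state at the exit of the embedding of }X\text{ in }Z)$), it is genuinely insufficient. Concretely, take the string analogue: let $L=\{a_1b_1,\dots,a_nb_n\}$ over alphabet $\{a_1,\dots,a_n,b_1,\dots,b_n\}$ with the minimal DFA $M$. For every $j$, the only $z\in L$ containing $b_j$ is $a_jb_j$, and the exit state of $b_j$'s embedding is always the single accepting state $f$; so the summary of $b_1,\dots,b_n$ is the same set $\{(f,f)\}$ for all $j$. Yet $(a_1,b_1)$ is joinable and $(a_1,b_2)$ is not. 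Any invariant computing something bounded in $Q(M)$ or $Q(M)^2$ at the root of each subtree will have the same problem.

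What the paper does instead is substantially different and is the key missing idea. It associates to the automaton $M$ a derived directed graph $\widehat{M}$ (essentially a line-graph construction tailored to tree automata), passes to the DAG $D$ of its strongly connected components, and defines a notion of ``tree walk'' on $D$. To a tree $Z$ it attaches a tree walk $W_Z$, and to a tree $X$ the per-tree invariant $\Wc_X=\{W_Z\mid Z\in L,\ X\preceq Z\}$, a subset of the finite set $\Wc$ of all tree walks. The nontrivial claim (Claim 1' in the paper) is that $(X,Y)$ is semi-joinable if and only if $\Wc_X\cap\Wc_Y\neq\varnothing$: the ``only if'' direction is trivial, while the ``if'' direction requires a careful pumping construction inside loopy scc's to build a single $Z$ containing both $X$ and $Y$. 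This invariant is strictly richer than any bounded-arity tuple of $M$-states — in the example above it distinguishes $b_j$ from $b_{j'}$ because the walks $W_{a_jb_j}$ and $W_{a_{j'}b_{j'}}$ use different edges — and that is precisely why the proof goes through. Your outline also differs in strategy in a secondary way: you try to build the recognizing automaton directly by a bottom-up recursion, whereas the paper bounds the size of minimal semibad pairs (Claims 2' and 3'), enumerates them, and writes $B_L'$ as a finite intersection of explicitly regular languages; either strategy could in principle work once the right per-tree invariant is in hand, but the invariant is the crux.
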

This immediately implies Theorem~\ref{thm:trees}, since it is decidable to determine if a given explicitly regular family is empty (and $B_L$ is empty if and only if $L$ has JEP).

In fact, we will prove something even slightly stronger. Say that $(X,Y)$ is an \textit{$L$-semibad pair} if there is no $Z\in L$ with $X,Y\preceq Z$ (the difference between semibad and bad pairs is that $X$ and $Y$ need not be in $L$ for a semibad pair).
\begin{lemma}\label{lem:main2}
Suppose $\Lc$ is an explicitly regular family of tree languages. Define
\[ B_L'\coloneq\{X\hash Y\mid \text{$(X,Y)$ is an $L$-semibad pair}\}. \]
Then $\{B_L'\}_{L\in\Lc}$ is an explicitly regular family.
\end{lemma}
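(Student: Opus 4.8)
The plan is to reduce the statement to a finite, computable description of the semibad pairs, after which Proposition~\ref{prop:basic} does the rest. For a fixed tree $W$ let $U(W)$ denote $\{T\in\Tc_k:W\preceq T\}$. By Proposition~\ref{prop:basic}(1) the language $\Forb_\preceq(\{W\})$ is explicitly regular, so by part~(2) its complement $U(W)$ is explicitly regular, uniformly in $W$. Hence, given trees $X_0,Y_0$ and a tree automaton for $L$, it is decidable whether $(X_0,Y_0)$ is an $L$-semibad pair: it is semibad exactly when $U(X_0)\cap U(Y_0)\cap L=\emptyset$, and one can construct automata for all three languages and test emptiness of the intersection. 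Next, the set of $L$-semibad pairs is upward closed in $(\Tc_k\times\Tc_k,\ \preceq\times\preceq)$, since any $Z\in L$ containing both $X$ and $Y$ also contains any $X_0\preceq X$ and $Y_0\preceq Y$. As $(\Tc_k,\preceq)$ is well-quasi-ordered (a form of Kruskal's theorem), so is the product, so the set of semibad pairs has only finitely many minimal elements $(X_1,Y_1),\dots,(X_t,Y_t)$, and every semibad pair lies above one of them. Therefore
\[ B_L'=\bigcup_{i=1}^{t}\,U(X_i)\hash U(Y_i), \]
which by Proposition~\ref{prop:basic}(2)--(3) is explicitly regular, \emph{provided the finite list $(X_1,Y_1),\dots,(X_t,Y_t)$ can be computed}.

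To compute that list it is enough to produce a computable bound $N$ — depending only on the given automaton — such that every minimal semibad pair has both coordinates of size at most $N$: then enumerate all pairs of trees of size at most $N$, discard the ones that are not semibad (decidable, as above), and use what remains. One need not even isolate the minimal pairs: using \emph{all} semibad pairs of size at most $N$ in the union above still produces exactly $B_L'$, because every semibad pair lies above a minimal one and minimal ones have size at most $N$.

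Everything thus reduces to this size bound, which I expect to be the main obstacle. I would prove it by a pumping argument on the automaton $M$ for $L$, organized around the rigidity of topological embeddings into trees: if $X\preceq Z$ via $\phi$, then for every node $z$ of $Z$ the set of vertices of $X$ sent into the subtree $Z_z$ is either empty or an entire rooted subtree of $X$. (As a warm-up, this rigidity makes the set $\rho^*(X)$ of states of $M$ attainable at the root of some $Z\succeq X$ computable bottom-up in $X$ with finitely many values, and it already pins down the ``incomparable-skeletons'' part of any common upper bound of $X$ and $Y$ in terms of $\rho^*(X)$ and $\rho^*(Y)$ alone.) For the bound: suppose $(X,Y)$ is a minimal semibad pair whose first coordinate exceeds a threshold to be read off from $M$. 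Pick an elementary reduction $X'\lessdot X$ — delete a deepest leaf or contract an edge — so minimality forces $(X',Y)$ to have a witness $Z\in L$ containing both $X'$ and $Y$; since $X'$ is large so is $Z$. Using Myhill--Nerode on $M$ together with the rigidity fact, perform surgery on $Z$ — reinserting the feature of $X$ that was deleted, while pumping down a redundant piece of $Z$ elsewhere — to obtain $Z'\in L$ containing both $X$ and $Y$, contradicting semibadness. The two delicate points are choosing the reduction and witness so that the reinsertion happens at a node whose upward context in $Z$ still leads to an accepting state, and arranging the compensating contraction so as not to break the embedding of $Y$ — where applying the rigidity fact to $Y$ as well is what supplies the needed slack once $Z$ is large. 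The bad-pair version, Lemma~\ref{lem:main}, then follows from the same analysis after intersecting the relevant $\hash$-joins with $L\hash L$.
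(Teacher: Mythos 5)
Your high-level reduction matches the paper: semibadness of a fixed pair is decidable, the set of semibad pairs is upward-closed in $\preceq_2$, and once a computable bound $N$ on the size of minimal semibad pairs is available one enumerates, tests, and writes $B_L'=\bigcup_i U(X_i)\hash U(Y_i)$, which is explicitly regular by Proposition~\ref{prop:basic}. The gap is entirely in the size bound, and the route you sketch for it --- surgery on the \emph{witness} $Z$ --- is not the paper's and, as described, does not close. The two ``delicate points'' you flag are not loose ends but the entire problem: after passing to an elementary reduction $X'$ of $X$ and taking a witness $Z\in L$ for $(X',Y)$, you must reinsert the deleted feature of $X$ while compensating elsewhere so that $M$ still accepts \emph{and} the embedding of $Y$ survives, and the images of $X'$ and $Y$ can be arbitrarily interleaved inside $Z$. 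There is no single state to Myhill--Nerode against. Your warm-up invariant $\rho^*(X)=\{M(Z):X\preceq Z\}$ already shows the issue: an accepting state lying in $\rho^*(X)\cap\rho^*(Y)$ gives no common upper bound, since the trees realizing that state for $X$ need not overlap with those realizing it for $Y$, and you offer no finer invariant to control this interaction.

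The paper avoids touching $Z$ and pumps $X$ directly. From $M$ it builds a ``gluing/union'' digraph $\widehat{M}$, passes to (a reachable part of) its line graph $J$ and the SCC-quotient DAG $D$, and assigns to each tree $Z$ a finite ``tree walk'' $W_Z$ on $D$. The invariant replacing your $\rho^*$ is $\Wc_X:=\{W_Z: Z\in L,\ X\preceq Z\}$, a subset of a fixed finite set $\Wc$. The load-bearing technical claim is that $(X,Y)$ is semibad if and only if $\Wc_X\cap\Wc_Y=\varnothing$; its proof is an elaborate inductive construction $f_{n,m}(W)$ of one tree in $L$ that contains \emph{every} small $U$ with $W\in\Wc_U$. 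With that in hand, $\Wc_X$ is computed by a subset tree automaton $N$ on $2^{\Wc}$ states, and the size bound follows by splicing: if $X$ is deeper than $\abs{2^{\Wc}}$, two ancestor--descendant nodes of $X$ share an $N$-state, giving a strictly smaller $X'$ with $\Wc_{X'}=\Wc_X$; a second splicing claim (this time on $\sigma_M$ together with $W_{Z_u}$) handles realizability of a given tree walk. To make your witness-surgery plan rigorous you would essentially need to invent an invariant equivalent to $\Wc_X$, at which point you have rederived the paper's argument; as written, the central step of the proposal is a genuine gap.
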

\begin{proof}[Proof that Lemma~\ref{lem:main2} implies Lemma~\ref{lem:main}]
By Proposition~\ref{prop:basic}, the language $B_L'\cap(L\hash L)$ is explicitly regular assuming $B_L'$ and $L$ are explicitly regular. This language is precisely $B_L$.
\end{proof}
We note that we may prove that these families are \textit{implicitly} regular very easily using the fact that $(\Tc_k,\preceq)$ is wqo:
\begin{proposition}\label{prop:implicit}
For any $L$, $B_L'$ is regular. In other words, $\{B_L'\}_{L\in\Lc}$ is an implicitly regular family where $\Lc$ is the set of all regular languages of trees in $\Tc_k$.
\end{proposition}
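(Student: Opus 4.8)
The plan is to exploit the fact, already invoked in the introduction, that $(\Tc_k,\preceq)$ is well-quasi-ordered (this is Kruskal's tree theorem in the version with finitely many labels). The first step is to observe that, when viewed as a set of \emph{pairs} rather than a set of trees, $B_L'$ is upward closed under the componentwise order on $\Tc_k\times\Tc_k$: if $(X,Y)$ is $L$-semibad and $X\preceq X'$, $Y\preceq Y'$, then any $Z\in L$ with $X',Y'\preceq Z$ would also satisfy $X,Y\preceq Z$, so $(X',Y')$ is $L$-semibad too. (This uses nothing about $L$; it holds for arbitrary $L$, which is why the proposition is stated that way, and it is only the ``family'' reformulation that needs $L$ regular.)

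The second step is that a finite product of wqos is wqo, so $(\Tc_k\times\Tc_k)$ with the componentwise order is wqo. Hence the upward-closed set of $L$-semibad pairs has only finitely many minimal elements, say $(X_1,Y_1),\dots,(X_n,Y_n)$. Consequently $(X,Y)$ is $L$-semibad if and only if $X_i\preceq X$ and $Y_i\preceq Y$ for some $i$, which gives
\[ B_L' = \bigcup_{i=1}^{n}\bigl(\{X\mid X_i\preceq X\}\hash\{Y\mid Y_i\preceq Y\}\bigr). \]

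The third step is routine: each $\{X\mid X_i\preceq X\}$ is the complement of $\Forb_\preceq(\{X_i\})$, which is regular by Proposition~\ref{prop:basic}(1), hence regular by part (2); the $\hash$-product of two regular tree languages is regular by part (3); and a finite union of regular languages is regular by part (2) again. Therefore $B_L'$ is regular, and taking $\Lc$ to be the set of all regular tree languages in $\Tc_k$ exhibits $\{B_L'\}_{L\in\Lc}$ as an implicitly regular family.

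I do not expect a genuine obstacle to \emph{existence} here — the argument above is short. The real point, and the reason this proposition does not already prove Theorem~\ref{thm:trees}, is that the argument is irredeemably non-constructive: wqo guarantees that the minimal semibad pairs $(X_i,Y_i)$ and the number $n$ exist, but supplies no bound on their size and no procedure to compute them from (an automaton for) $L$. Making this effective — i.e.\ obtaining explicit bounds on minimal (semi)bad pairs so that the automaton of the previous display can actually be built — is precisely the work deferred to the next subsection.
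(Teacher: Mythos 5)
Your proof is correct and follows essentially the same route as the paper: invoke Kruskal's theorem to get that semibad pairs form an upward-closed subset of a wqo product poset, extract the finitely many minimal elements, and express $B_L'$ as a finite union of $\hash$-products of upward closures, each regular by Proposition~\ref{prop:basic}. If anything your write-up is slightly more careful than the paper's, which loosely speaks of $L^2$ being wqo and of $X',Y'\in L$ even though semibad pairs range over all of $\Tc_k\times\Tc_k$; your version correctly works in $\Tc_k\times\Tc_k$ and drops the spurious restriction to $L$.
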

\begin{proof}
Since $\Tc_k$ is wqo under $\preceq$ \cite{kruskal}, the set $L^2$ is wqo under $\preceq_2$ with $(X_1,Y_1)\preceq_2(X_2,Y_2)$ if $X_1\preceq Y_1$ and $X_2\preceq Y_2$. Note that if $(X_1,Y_1)$ is an $L$-semibad pair and $(X_1,Y_1)\preceq_2(X_2,Y_2)$, then $(X_2,Y_2)$ is also semibad. Therefore there are finitely many \textit{minimal} (under $\preceq_2$) semibad pairs $(X,Y)$; let the set of such minimal semibad pairs be $R_L$.
Then by Proposition~\ref{prop:basic}, the following languages are regular:
\begin{enumerate}
    \item The set $L_{X,Y}$ of trees of the form $X'\hash Y'$ where $(X,Y)\preceq_2(X',Y')$ and $X',Y'\in L$, given $X,Y\in L$.
    \item $\bigcup_{(X,Y)\in R_L}L_{X,Y}$, using here the fact that $R_L$ is finite.
\end{enumerate}
The latter language is precisely $B_L'$.
\end{proof}
This argument seems like it is not all that useful for the purposes of proving Theorem~\ref{thm:trees}. Indeed, the proof uses essentially no information about $L$ at all! We discuss wqo further in Section~\ref{sec:more}.

\subsection{A more explicit construction}

In this subsection we prove Lemma~\ref{lem:main2}. First, we clarify the definition of a (deterministic) tree automaton $M$. $M$ contains the following data:
\begin{itemize}
    \item a finite \textit{label set} $\Lambda$,
    \item a finite \textit{state set} $\Sigma$,
    \item a subset $\Sigma_a\subseteq \Sigma$ of \textit{accepting states}.
    \item two \textit{transition functions} $m_0:\Lambda\to\Sigma$ and $m_2:\Lambda\times\Sigma^2\to\Sigma$. We may assume $m_2(x,y,z)=m_2(x,z,y)$ for all $x,y,z$, so the automaton has the same behavior on isomorphic trees.\footnote{This assumption is not strictly necessary, but it saves us having to distinguish between ``left'' and ``right'' children. The analogous result without this assumption can be obtained by replacing $\Lambda$ with $\{L, R\}\times\Lambda$ and enforcing that each non-leaf has one child with label of the form $(L,\lambda)$ (the ``left'' child) and one child with label of the form $(R, \mu)$ (the ``right'' child); this modified language is still (explicitly) regular.}
\end{itemize}
$M$ works by defining, for each vertex $u$ of $T$ starting from the leaves,
\[ \sigma_M(u)\coloneq\begin{cases}
    m_0(\lambda_u) & \text{if $u$ is a leaf} \\
    m_2(\lambda_u, \sigma_M(v_1), \sigma_M(v_2)) & \text{if $u$ has children $v_1$ and $v_2$,}
\end{cases} \]
where $\lambda_u$ is the label of vertex $u$; here $T$ is assumed to be labeled with labels from $\Lambda$. Then $M$ \textit{accepts} $T$ if $\sigma_M(r)\in \Sigma_a$, where $r$ is the root of $T$. We say $M$ \textit{recognizes} $L$ if $L$ is the set of trees that $M$ accepts. We write $M(T)$ to denote $\sigma_M(r)$ when $M$ is run on tree $T$ with root $r$. Our definitions differ only superficially from the definitions in \cite{tata} (when appropriately restricted to binary trees).

Before we prove Lemma~\ref{lem:main}, we will first prove the analogous result for regular languages of strings, which demonstrates most of the main ideas. The general outline for both proofs is as follows:
\begin{enumerate}[start=0]
    \item Build a special directed graph using an automaton recognizing $L$, and associate to each string/tree $z$ a ``walk'' $W_z$, of which there are finitely many possibilities.
    \item Show that the set of walks $\Wc_x=\{W_z\mid z\preceq x\text{ and }z\in L\}$ gives enough information to determine if a pair is semibad or not (i.e.\ whether or not $(x,y)$ is semibad can be determined just from $\Wc_x$ and $\Wc_y$).
    \item Show that the set of possible $\Wc_x$ sets is computable.
    \item Use this to find the set of minimal semibad pairs (as in the proof of Proposition~\ref{prop:implicit}), then construct an automaton to recognize $B_L'$.
\end{enumerate}

\begin{lemma}\label{lem:strings}
Let $(L, \preceq)$ be a partially ordered set of strings, where $L$ is an explicitly regular language and $a\preceq b$ if $a$ is a (not necessarily contiguous) substring of $b$. Then $B_L'=\{x\hash y\mid \text{$(x,y)$ is an $L$-semibad pair}\}$ is explicitly regular.
\end{lemma}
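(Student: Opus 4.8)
The plan is to follow the four-step outline given in the excerpt. Fix a DFA $M = (\Sigma, \Lambda, \delta, q_0, \Sigma_a)$ recognizing $L$. Step 0: build a directed graph $G$ whose vertices are the states $\Sigma$, with an edge from $p$ to $q$ labeled by the letter $c$ whenever $\delta(p,c) = q$. To a string $z = c_1 c_2 \cdots c_n$ I associate the walk $W_z$ in $G$ that starts at $q_0$ and follows the edges labeled $c_1, \dots, c_n$ in order; but what I really want to record is only the \emph{abstracted} walk: the sequence of states visited, with consecutive repetitions and loops collapsed, i.e.\ the sequence of \emph{distinct} states in order of first visit, together with, for each ordered pair of states $(p,q)$, a bit recording whether $z$ contains a contiguous block taking $p$ to $q$ — actually, the right invariant is: the set of pairs $(p,q)$ such that some (not necessarily contiguous) subsequence of $z$ labels a path from $p$ to $q$ in $G$. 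Since $\Sigma$ is finite, there are only finitely many such abstracted walks $W_z$, and crucially $W_z$ is monotone: if $z \preceq x$ then every path realizable as a subsequence of $z$ is realizable as a subsequence of $x$, so $W_z \le W_x$ in the natural containment order on these objects.

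Step 1 (the heart of the argument): show that $\Wc_x = \{W_z \mid z \preceq x,\ z \in L\}$ determines whether $(x,y)$ is semibad. A pair $(x,y)$ is \emph{not} semibad iff there exists $z \in L$ with $x \preceq z$ and $y \preceq z$; equivalently, there is an accepting run of $M$, i.e.\ a walk from $q_0$ to some state in $\Sigma_a$, that "contains" both $x$ and $y$ as subsequences simultaneously. The key observation is that such a $z$ can be built by interleaving: I would argue that $(x,y)$ is not semibad iff there is a path $q_0 = r_0 \to r_1 \to \cdots \to r_m$ in $G$ with $r_m \in \Sigma_a$, and a way to split $x$ into consecutive blocks $x = x_1 \cdots x_m$ and $y$ into consecutive blocks $y = y_1 \cdots y_m$, such that for each $i$ the block $x_i$ and the block $y_i$ can both be "embedded" into the segment of the walk from $r_{i-1}$ to $r_i$ — where embedding $x_i$ into a single edge-loop segment at a repeated state is what lets us absorb arbitrarily long strings. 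Unwinding this, whether $(x,y)$ is semibad depends only on, for each pair of states $(p,q)$, whether $x$ (resp.\ $y$) has a subsequence labeling a $p \leadsto q$ walk, i.e.\ exactly on $W_x$ and $W_y$ — because strongly connected components of $G$ act as "sinks" absorbing anything, and the acceptance condition is a reachability condition between SCCs. I need to be careful to phrase this as: $(x,y)$ is semibad iff for every $W' \in \Wc_x$ and every $W'' \in \Wc_y$ the pair $(W', W'')$ is "incompatible" in $G$, where incompatibility of two abstracted walks is a decidable finite check. This reduces "semibad" to a Boolean function of the finite data $(\Wc_x, \Wc_y)$.

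Step 2: the set of achievable $\Wc_x$ is computable, because $x \mapsto \Wc_x$ is itself computed by a finite automaton: the state after reading a prefix is (the state of $M$) together with (the set of abstracted walks $W_z$ over $z \preceq$ the prefix with $z \in L$), and this updates deterministically on each new letter; there are finitely many such states, and the transition table is explicitly computable from $M$. Step 3: as in the proof of Proposition 2.7, since $(x,y) \mapsto (\Wc_x,\Wc_y)$ is automatic and "semibad" is a property of $(\Wc_x,\Wc_y)$, the set $B_L' = \{x \hash y : (x,y) \text{ semibad}\}$ is recognized by the product automaton that runs the $\Wc$-automaton on the left subtree of $\hash$, on the right subtree, and then checks the Boolean semibad condition at the root; this construction is explicit given $M$. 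The main obstacle is Step 1 — pinning down exactly which finite invariant of $x$ is needed and proving the "interleaving" characterization of non-semibad pairs correctly, in particular handling the absorption of long strings at cycles of $G$ and making sure the invariant $W_x$ (as opposed to something finer) genuinely suffices. Everything after that is bookkeeping with automaton products.
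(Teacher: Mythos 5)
Your outline matches the paper's four-step template, and your intuition about SCCs acting as absorbing ``sinks'' is exactly the right heuristic. But there is a genuine error in Step 1, in the very definition of the invariant $\Wc_x$, and it is not a typo one can wave away: you define $\Wc_x = \{W_z \mid z \preceq x,\ z \in L\}$, i.e.\ you record invariants of substrings $z$ of $x$ that happen to lie in $L$. The paper's definition points the other way: $\Wc_x = \{W_z \mid x \preceq z,\ z \in L\}$, i.e.\ walks of strings $z \in L$ that \emph{contain} $x$. Semibadness is a statement about the existence of a common superstring in $L$, so the relevant data are the superstrings of $x$ in $L$, not its substrings. Your version simply does not carry enough information. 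Concretely: take $L = \{a\}$ over alphabet $\{a,b\}$. Then with your definition $\Wc_b = \Wc_\epsilon = \varnothing$ (no substring of $b$ or of $\epsilon$ lies in $L$), yet $(b,b)$ is semibad while $(\epsilon,\epsilon)$ is not (witness $a$). No Boolean function of $(\Wc_x,\Wc_y)$ in your sense can separate these, so your Step 1 cannot succeed as stated. Your monotonicity remark (``if $z \preceq x$ then $W_z \le W_x$'') confirms the direction is intended, not accidental, since it is only true for the substring-based invariant.

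A second, secondary gap: even after reversing the direction, your per-string invariant $W_z$ (the set of pairs $(p,q)$ such that some subsequence of $z$ labels a $p\leadsto q$ walk) forgets the \emph{order} in which states and, more importantly, transitions are traversed. The paper instead takes the line graph $L(M)$ of the transition graph, forms its SCC quotient DAG $D$, and defines $W_z$ to be the actual walk in $D$ traced by the run of $M$ on $z$. The passage to the line graph matters because its SCCs track which \emph{labeled transitions} are mutually reachable; this is exactly what the pumping argument in the paper's Claim 1 needs when it inflates each loopy SCC to absorb arbitrary strings of bounded length. An unordered set of reachable state pairs does not support that construction directly, and you would have to invent a different compatibility check and prove it correct — work you flag but do not do. Your Steps 2 and 3 (the subset-automaton for $x \mapsto \Wc_x$, the product automaton for $\hash$) are sound in outline and essentially match the paper, so the remaining work is entirely in getting the invariant of Step 1 right and proving the analogue of the paper's Claim 1.
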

Here, in ``$x\hash y$'', $\hash$ is a special symbol that does not appear in the alphabet underlying $L$. This implies that determining if $(L,\preceq)$ has JEP is decidable if a deterministic finite automaton recognizing $L$ is given. This generalizes the main result of \cite{subword}, which is that it is decidable to determine if $\Forb_\preceq(S)$ has JEP for any finite set of strings $S$ where $\preceq$ is the non-necessarily-contiguous substring relation. However, their algorithm is extraordinarily more efficient than ours, as we discuss later.

For this proof and the next, we will need the concept of a line graph of a directed graph. If $G$ is a directed graph, the \textit{line graph} of $G$, $L(G)$, has vertices $E(G)$ and a directed edge from $uv$ to $vw$ for each $uv,vw\in E(G)$ (here $uv$ and $vw$ are also directed edges, from $u$ to $v$ and from $v$ to $w$, respectively). A loop in $G$ at vertex $v$ should be read as ``$uv$ with $u=v$'', so the vertex in $L(G)$ corresponding to this loop also has a loop in $L(G)$ (i.e.\ $uv$ still has an edge to $vw$ in the case $u=v=w$). A useful property is that if $C$ is a strongly connected component (scc) of $G$, then the edges between vertices in $C$ forms an scc in $L(G)$. If an edge $uv\in E(G)$ is labeled, the corresponding vertex in $L(G)$ gets the same label.
\begin{proof}[Proof of Lemma~\ref{lem:strings}]
Let $M$ be a deterministic finite (string) automaton recognizing $L$. We may view $M$ as an edge-labeled directed graph in the obvious way, where the vertices are states of $M$ and edges are transitions labeled by the appropriate element of $\Lambda$, the underlying alphabet. Let $C_1,\dots,C_k$ be the scc's of $L(M)$. The quotient $L(M)/\{C_1,\dots,C_k\}$ is a directed acyclic graph $D$ after loops are removed. Call a vertex $C_i$ of $D$ \textit{loopy} if it has a loop in the quotient $L(M)/\{C_1,\dots,C_k\}$ and \textit{non-loopy} otherwise; loopy vertices correspond to scc's of $L(M)$ that either have a loop or have at least two vertices. Call $C_i$ \textit{initial} if some out-edge of the start state of $M$ appears in $C_i$ and \textit{accepting} if some in-edge of an accepting state of $M$ appears in $C_i$.


Let $\Wc$ be the set of directed walks in $D$ starting from an initial vertex. Obviously $\Wc$ is finite and computable. For any string $z$ let $W_z$ be the walk in $D$ corresponding to the sequence of scc's that $M$ hits when run on input $z$. Note that every element of $\Wc$ is equal to $W_z$ for some string $z$. Let $\Wc_x=\{W_z\mid x\preceq z\text{ and }z\in L\}$.

\begin{claim}\label{claim:one}
For any $x,y$, there is a $z\in L$ with $x,y\preceq z$ if and only if $\Wc_x\cap \Wc_y\ne\varnothing$.
\end{claim}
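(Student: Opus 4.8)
The plan is to prove both directions of the biconditional by relating the existence of a common superword $z \in L$ to the combinatorics of walks in the DAG $D$. The forward direction is nearly immediate: if $z \in L$ with $x, y \preceq z$, then $W_z \in \Wc_x$ and $W_z \in \Wc_y$ by definition, so $\Wc_x \cap \Wc_y \ne \varnothing$. So the entire content is in the reverse direction.

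For the reverse direction, suppose $W \in \Wc_x \cap \Wc_y$. Then there exist $z_1, z_2 \in L$ with $x \preceq z_1$, $y \preceq z_2$, and $W_{z_1} = W_{z_2} = W$. The goal is to manufacture a single $z \in L$ with $x, y \preceq z$. The key idea is that $z_1$ and $z_2$ trace the same sequence of scc's of $L(M)$, and within each loopy scc we have a lot of freedom: we can ``pump'' — insert extra letters that keep us inside the scc — and more importantly, we should be able to \emph{interleave} the portions of $z_1$ and $z_2$ that live in a common loopy scc. Concretely, I would break $z_1$ into blocks $z_1 = u_1 u_2 \cdots u_m$ according to which scc $C_{i_j}$ of $L(M)$ the automaton is in, and similarly $z_2 = v_1 v_2 \cdots v_m$ with the \emph{same} scc sequence $C_{i_1}, \dots, C_{i_m}$ (same because $W_{z_1} = W_{z_2} = W$ — note that consecutive repeats in the scc-sequence are exactly the loopy steps, and the walk $W$ in $D$ records the scc-sequence with loops collapsed, but since $M$ is deterministic and the block boundaries are determined by when the scc changes, equality of walks forces the blockwise scc's to agree). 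Within a non-loopy scc the block is forced to be a single edge, so $u_j = v_j$ there. Within a loopy scc $C_{i_j}$, both $u_j$ and $v_j$ are (reading off the edge-labels of) walks within that scc between the appropriate boundary states; since $C_{i_j}$ is strongly connected, I can find a single walk within $C_{i_j}$, from the entry state to the exit state, that contains both $u_j$ and $v_j$ as (scattered) subwords — e.g. go around following $u_j$, then detour back and follow $v_j$, using strong connectivity to return. Call the resulting label-word $w_j$; then $u_j \preceq w_j$ and $v_j \preceq w_j$, and crucially $w_1 w_2 \cdots w_m$ is still accepted by $M$ because it traces a valid walk through the same scc's ending at the same accepting state. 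Setting $z = w_1 \cdots w_m$ gives $z \in L$ with $x \preceq z_1 = u_1 \cdots u_m \preceq w_1 \cdots w_m = z$ and likewise $y \preceq z$.

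The main obstacle I anticipate is the interleaving step inside a loopy scc, and in particular being careful that the entry and exit states of block $j$ in the two decompositions genuinely coincide (or can be made to coincide). Equality of the walks $W_{z_1} = W_{z_2}$ only directly tells us the \emph{sequence of scc's} agrees, not that $M$'s actual state at each block boundary is the same in both runs. I would handle this by enriching the walk data, or by noting that the boundary state entering $C_{i_{j+1}}$ is the head of the unique first edge of $L(M)$ leaving $C_{i_j}$ along the transition used — but this edge could differ between the two runs. The clean fix is to first reduce to the case where the two boundary states agree by a further pumping argument within $C_{i_j}$: since $C_{i_j}$ is strongly connected, from any state of $C_{i_j}$ we can reach any other, so we can append to $u_j$ a short word driving $M$ to whatever boundary state $v_j$ uses (and symmetrically), at the cost of only enlarging the word — which is harmless since we only need $u_j, v_j \preceq w_j$. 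Once the boundary states are aligned, the concatenation $w_1 \cdots w_m$ is a legitimate accepting run and the argument closes. A secondary point to check is the degenerate case where $z_1$ or $z_2$ is empty or where the walk has length zero, and the case of initial/accepting scc's at the ends, but these are routine once the block structure is set up.
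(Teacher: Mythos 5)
Your proof is correct, and it takes a genuinely different route from the paper's. The paper's proof constructs a single \emph{universal} witness from the walk $W = C_{i_1}\dots C_{i_\kappa}$: for each loopy scc it takes a closed walk visiting every vertex at least $n = \max\{|x|,|y|\}$ times, concatenates these with the non-loopy one-vertex pieces, reads off the labels, and then verifies by induction (on $n$ and $\kappa$) that the resulting $z$ contains \emph{every} string $w$ with $|w|\le n$ and $W \in \Wc_w$. Your proof instead takes two concrete witnesses $z_1, z_2 \in L$ with $W_{z_1} = W_{z_2} = W$ and merges them block-by-block. Both approaches hinge on the same structural facts (the scc decomposition of $L(M)$, and the ability to pump freely inside a loopy scc), but yours avoids the paper's induction entirely and is arguably the more intuitive argument for this claim in isolation; the paper's universal construction buys a stronger statement (containment of \emph{all} small strings with the given walk) that it happens not to need later, since Claims 2 and 3 use only the statement of Claim 1.

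You correctly identify the one real subtlety — the block boundaries of $z_1$ and $z_2$ inside a loopy scc need not be the same vertex of $L(M)$ — and your pumping fix works. A slightly cleaner way to close it: fix the entry and exit vertices of $w_j$ to equal those of $u_j$ for every $j$; since $C_{i_j}$ is strongly connected, from $u_j$'s exit vertex you can reach $v_j$'s entry vertex, traverse $v_j$, and return to $u_j$'s exit vertex, all within $C_{i_j}$. Then $u_j$ is a prefix of $w_j$ and $v_j$ is a contiguous factor, so $u_j, v_j \preceq w_j$, and the concatenation $w_1\cdots w_m$ is word-for-word a valid accepting run of $M$ because its transitions across block boundaries are exactly those of $z_1$. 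This sidesteps having to realign both runs to a common boundary.
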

    If $x,y\preceq z$ then $W_z\in \Wc_x\cap \Wc_y$ by definition. On the other hand, if $\Wc_x\cap \Wc_y$ is nonempty, say containing walk $W=C_{i_1}\dots C_{i_\kappa}$, then let $z$ be the following string. If $C_{i_\alpha}$ is loopy, let $c_\alpha$ be a walk in $C_{i_\alpha}\subseteq L(M)$ that contains every vertex of $C_{i_\alpha}$ at least $n$ times, where $n=\max\{\abs{x},\abs{y}\}$. If $C_{i_\alpha}$ is not loopy, then let $c_\alpha$ be the one-vertex walk consisting of the single vertex inside $C_{i_\alpha}$. This walk should be chosen to start at an out-neighbor of $C_{i_{\alpha-1}}$ (if $\alpha>1$) and end at an in-neighbor of $C_{i_{\alpha+1}}$ (if $\alpha<\kappa$), so that the concatenation $c_1\cdots c_\kappa$ represents a walk in $L(M)$. Additionally, $c_1$ should start at an out-edge (in $M$) of the start state of $M$ and $c_\kappa$ should end at an in-edge of an accepting state of $M$. The latter restriction is possible to fulfill by the assumption that $W\in \Wc_x$, so $C_{i_\kappa}$ is accepting.

Then $z$ is formed by reading off the labels of the vertices hit by $c_1\cdots c_\kappa$, so that $M$ run on $z$ uses precisely the transitions dictated by $c_1\cdots c_\kappa$. It is easy to see by induction on $n$ and $\kappa$ that $w\preceq z$ for \textit{every} string $w$ with $\abs{w}\le n$ and $W\in \Wc_w$; in particular, $z$ contains $x$ and $y$.
\qqed

Therefore for the purposes of finding minimal $L$-semibad pairs, it suffices to consider only the smallest possible string $x$ with a particular $\Wc_x$. Not all subsets of $\Wc$ are necessarily realized as $\Wc_x$ for some $x$, but any $\Wc_x$ is realized by a bounded-size string:
\begin{claim}
Consider $\Wc_x$ for arbitrary $x$. There is an $x^*$ with $\abs{x^*}\le 2^{\abs{\Wc}}$ and $\Wc_x=\Wc_{x^*}$.
\end{claim}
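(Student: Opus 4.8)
The plan is to show that the map $x \mapsto \Wc_x$ is "monotone enough" that one can shrink any $x$ to a bounded-size $x^*$ realizing the same $\Wc_x$. The key structural observation is the following: if $x \preceq x'$ (i.e.\ $x$ is a subword of $x'$), then $\Wc_{x'} \subseteq \Wc_x$, since any $z \in L$ with $x' \preceq z$ also satisfies $x \preceq z$ by transitivity, so $W_z$ lies in $\Wc_x$ whenever it lies in $\Wc_{x'}$. Thus $\Wc$ being a fixed finite set of size $\abs{\Wc}$, the number of distinct sets $\Wc_x$ is at most $2^{\abs{\Wc}}$, and — more importantly — deleting a single letter from $x$ can only \emph{enlarge} $\Wc_x$ (weakly). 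So I would take $x$, and repeatedly delete a letter whenever doing so does not change $\Wc_x$; when no such deletion is possible, call the result $x^*$. It remains to bound $\abs{x^*}$.

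The bound comes from the following: write $x^* = a_1 a_2 \cdots a_m$ and consider the prefixes $\varepsilon, a_1, a_1 a_2, \ldots, x^*$. For each prefix $p$, record the set $\Wc_p$. I claim these $m+1$ sets must all be distinct. Indeed, suppose $\Wc_{p} = \Wc_{p'}$ for two prefixes $p \preceq p'$ of $x^*$ with $p$ shorter; I want to derive that the letters of $p'$ not in $p$ are redundant in $x^*$, contradicting minimality. More precisely, writing $x^* = p'\, s$ where $s$ is the corresponding suffix and $p = p'$ with one block deleted, I would argue that $\Wc_{p s} = \Wc_{p' s} = \Wc_{x^*}$: the inclusion $\Wc_{x^*} = \Wc_{p's} \subseteq \Wc_{ps}$ holds by monotonicity (since $p s \preceq p' s = x^*$), and the reverse inclusion should follow from the equality $\Wc_p = \Wc_{p'}$ together with a "suffix-concatenation" compatibility of the $\Wc$-construction — namely that $\Wc_{q s}$ depends on $q$ only through $\Wc_q$ (this is because $\Wc_q$ records exactly which walks in $D$ can be "completed to the left" by $q$, and prepending $q$ before a fixed suffix $s$ only interacts with $s$ through this data). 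Granting this, $p s$ is a strictly shorter string than $x^*$ with the same $\Wc$-set, contradicting the fact that $x^*$ admits no letter-deletion preserving $\Wc_{x^*}$. Hence the prefix-sets are pairwise distinct, so $m + 1 \le 2^{\abs{\Wc}}$, giving $\abs{x^*} = m \le 2^{\abs{\Wc}}$ as desired. (The slightly lossy bound $2^{\abs{\Wc}}$ rather than $2^{\abs{\Wc}}-1$ is harmless.)

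The main obstacle I anticipate is making rigorous the "compatibility" claim that $\Wc_{qs}$ is determined by $\Wc_q$ and $s$ — that is, isolating exactly what invariant of a prefix governs how the full walk $W_z$ can look for $z$ containing $qs$. One clean way to do this is to reformulate $\Wc_q$ (or an appropriate refinement of it) as a set of \emph{partial walks with endpoint data}: for each string $q$, the relevant information is the set of pairs (walk $W$ in $D$ reachable from an initial vertex, state of $M$ at which the $q$-portion of an accepting run ending with the remaining suffix could sit) such that $q$ can be embedded into the prefix of some $z \in L$ with $W_z$ extending $W$. If the naive $\Wc_q$ is not quite enough, I would enrich it with this endpoint/state information — there are still only boundedly many such enriched objects, so the counting argument goes through with $2^{\abs{\Wc}}$ replaced by a (still computable, still constant) larger bound, which is all the lemma needs. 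I expect the tree case (Lemma~\ref{lem:main2} proper) to require the analogous enrichment in an essential way, so setting it up carefully here is worthwhile.
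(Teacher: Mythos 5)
The monotonicity observation ($x\preceq x'$ implies $\Wc_{x'}\subseteq\Wc_x$) is correct, and the greedy-deletion setup is reasonable, but the step you flag as the ``main obstacle'' is a genuine gap, not a technicality: the claim that $\Wc_{qs}$ depends on $q$ only through $\Wc_q$ is \emph{false}, and with it your ``prefixes of $x^*$ have pairwise distinct $\Wc$-sets'' argument collapses. Concretely, take $L=\{ab,\, b\}$ over $\{a,b\}$. Every $z\in L$ contains both the empty string $\varepsilon$ and the string $b$ as subwords, so $\Wc_\varepsilon=\Wc_b=\{W_{ab},W_b\}$; yet appending $a$ gives $\Wc_{\varepsilon a}=\Wc_a=\{W_{ab}\}$ while $\Wc_{ba}=\varnothing$ (since $ba$ embeds in no element of $L$). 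Moreover $x^*=ba$ is already a fixed point of your greedy deletion (removing either letter changes $\Wc$), and its three prefixes $\varepsilon,\, b,\, ba$ have $\Wc$-sets $\{W_{ab},W_b\},\ \{W_{ab},W_b\},\ \varnothing$ --- not pairwise distinct. So the counting argument as stated does not go through.

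Your proposed repair --- enrich the invariant attached to a prefix $q$ so that it becomes compositional under concatenation --- is exactly what the paper does, and it does close the gap. The paper defines a deterministic string automaton $N$ with state set $2^{\Wc}$; the state $N(q)$ records not which \emph{accepting-ending} walks $W_z$ admit an embedding of $q$, but which walks $W\in\Wc$ (read as \emph{prefixes} of potential accepting walks) admit an embedding of $q$ that terminates inside their last scc. Because $N$ is a bona fide automaton, $N(qs)$ is determined by $N(q)$ and $s$, and $\Wc_q$ is recoverable from $N(q)$ by restricting to walks whose final scc is accepting. The paper then pigeonholes directly on the sequence of $N$-states visited while reading $x$: if $\abs{x}>2^{\abs{\Wc}}$, two of those states coincide and the intervening block can be excised without changing $N(x)$, hence without changing $\Wc_x$. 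This avoids reasoning about a ``minimal'' $x^*$ altogether. In the example above $N(\varepsilon)\ne N(b)$ even though $\Wc_\varepsilon=\Wc_b$, which is precisely what lets $N$ ``remember'' the $b$. One pleasant point: you worried that enriching would inflate the bound beyond $2^{\abs{\Wc}}$, but it does not --- prefixes of walks in $\Wc$ are themselves walks in $\Wc$, so the enriched state space is still $2^{\Wc}$ and the bound $2^{\abs{\Wc}}$ survives unchanged.
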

Suppose that $\abs{x}>2^{\abs{\Wc}}$. We will find a smaller $x'$ with $\Wc_{x'}=\Wc_x$. Define a deterministic automaton $N$ as follows:
\begin{itemize}
    \item The alphabet of $N$ is the same as $M$; call it $\Lambda$.
    \item The states of $N$ are $2^\Wc$.
    \item The start state of $N$ is the set containing all one-vertex walks $C_i$ where $C_i$ is initial.
    \item For each $S\subseteq\Wc$ and $\lambda\in\Lambda$, there is a transition from $S$ to $S'$ labeled $\lambda$ where
    \begin{align*}
        S'=\{W\mid{}&\text{there is a $W'\in S$ such that $W$ has prefix $W'$ and} \\
        &\text{the last vertex $C_i$ of $W$ contains some vertex labeled $\lambda$}\}.
    \end{align*}
\end{itemize}
The accepting states of $N$ are irrelevant for our discussion. Then it is not difficult to see by induction on $\abs{x}$ that $\Wc_x$ is precisely the set of walks $W$ in $\Wc$ such that a prefix of $W$ is in $N(x)$ (the state $N$ is left in after reading $x$) and the last scc $C_{i_\kappa}$ in $W$ is accepting.

We have that $N$ has $2^{\abs{\Wc}}$ states, so by our assumption on the length of $x$, there are $\beta_1<\beta_2$ such that the $\beta_1$st and $\beta_2$nd states hit by $N$ running on input $x$ are the same. Define $x'$ to be the string obtained from $x$ by deleting symbols $\beta_1+1$ through $\beta_2$. We have $N(x')=N(x)$ so $\Wc_{x'}=\Wc_x$ by the previous discussion, proving the claim.
\qqed

Therefore for any minimal $L$-semibad pair $(x,y)$, both $x$ and $y$ have length at most $2^{\abs{\Wc}}$. The above proof also gives a method to compute $\Wc_x$ for arbitrary $x$: run the automaton $N$ on input $x$ and check which walks $W\in N(x)$ are possible prefixes of walks that end in an accepting $C_{i_\kappa}$. This allows us to compute the set of minimal $L$-semibad pairs $R_L$, which is sufficient to construct an automaton to recognize $B_L'$. Indeed, $B_L'$ is simply
\[ (\Lambda^*\hash \Lambda^*) \cap \bigcap_{(x,y)\in R_L}\overline{L_x\hash L_y} \]
where $L$ has alphabet $\Lambda$ and $L_w$ is the set of strings $z$ with $w\preceq z$, which is explicitly regular (here $\Lambda^*$ is the Kleene star of $\Lambda$, the set of finite strings with alphabet $\Lambda$).
\end{proof}
Note that $\abs{\Wc}$ is not more than exponentially large in the number of transitions of $M$, and if $L=\Forb_\preceq(S)$ for some $S$, then the number of states in the smallest $M$ recognizing $L$ is not more than exponential in $n=\sum_{f\in S}\abs{f}$, so the upper bound we obtain on the size of the smallest $L$-semibad pair is triply exponential in $n$. Then there are at most quadruply exponentially many minimal $L$-semibad pairs, so smallest automaton recognizing $B_L'$ (and $B_L$) is not more than quintuply exponential in $n$. The algorithm in \cite{subword}, on the other hand, runs in time $O(\abs{S}n^2)$.

Now we proceed with the proof for tree languages. This proof follows the same general structure as the proof for string languages, but is rather more complicated, particularly the proof of the analogous claim to Claim~\ref{claim:one}.

\begin{proof}[Proof of Lemma~\ref{lem:main2}]
Let $M=(\Lambda,\Sigma,\Sigma_a,m_0,m_2)$ be a deterministic tree automaton recognizing $L$. We may assume that all states of $M$ are reachable, i.e.\ for each $s\in\Sigma$ there is a tree $T$ such that $M(T)=s$. The key for this proof is to find the right notion of ``strongly connected component'' for tree automata, noting that tree automata are not naturally directed graphs. Define $\widehat{M}$ to be the partially labeled directed graph where:
\begin{itemize}
    \item The vertices are $\Sigma\times\Lambda\sqcup \Sigma^2$. Call the vertices in $\Sigma\times\Lambda$ \textit{tree-like} and those in $\Sigma^2$ \textit{forest-like}.
    \item There is a directed edge labeled $\lambda$ from forest-like $(s_1,s_2)$ to tree-like $(t,\lambda)$ when $f(\lambda,s_1,s_2)=t$, and unlabeled directed edge from tree-like $(t,\lambda)$ to forest-like $(t,s)$ and $(s,t)$. Call edges of the first type \textit{gluing edges} and edges of the second type \textit{union edges}.
\end{itemize}
Note that $\widehat{M}$ is bipartite in the sense that there are no edges between two tree-like vertices or two forest-like vertices. The vertices of $L(\widehat{M})$ are called \textit{gluing} or \textit{union} if they correspond to gluing or union edges in $\widehat{M}$, respectively. $L(\widehat{M})$ is also bipartite: there are no edges (in $L(\widehat{M})$) between two gluing vertices or two union vertices.

Say that a vertex of $L(\widehat{M})$ is \textit{post-initial} if it is a union vertex that corresponds to an out-edge from tree-like $(t,\lambda)\in V(\widehat{M})$ with $m_0(\lambda)=t$. Let $J$ be the following graph. Delete from $L(\widehat{M})$ any vertex not reachable from post-initial vertices. Then add one vertex with label $\lambda$ for each $\lambda\in\Lambda$, with an out-edge from this vertex $\ell$ to each union vertex $uv$ whose corresponding edge in $\widehat{M}$ is an in-edge of forest-like $(m_0(\lambda),t)$ or $(t,m_0(\lambda))$ for some $t\in\Sigma$ (i.e.\ $v=(m_0(\lambda),t)$ or $(t,m_0(\lambda))$). These vertices $\ell$ of $J$ are called \textit{initial}. The out-neighbors of initial vertices of $J$ are precisely the post-initial vertices, and initial vertices have no in-neighbors. It is the scc's of $J$ that will be relevant to us.

The next modification compared to the proof of Lemma~\ref{lem:strings} is that we must consider ``tree walks'' instead of conventional walks. Let $D$ be the quotient of $J$ by its scc's $C_1,\dots,C_k$, excluding loops; $D$ is a directed acyclic graph. Say that a vertex of $D$ is \textit{initial} if some vertex in the corresponding scc is initial; note initial vertices of $D$ correspond to scc's consisting of a single initial vertex of $J$. As before, a vertex of $D$ is called \textit{loopy} if its corresponding scc of $J$ contains a closed walk and \textit{non-loopy} otherwise. Call a vertex of $D$ \textit{gluing} if the corresponding scc contains a gluing vertex of $J$ and \textit{union} if it contains a union vertex of $J$. Vertices of $D$ may be both gluing and union if they contain both gluing and union vertices; in fact, all loopy vertices are both gluing and union.

Then a \textit{tree walk} on $D$ is a rooted, labeled, \textit{not necessarily binary} tree $W$ labeled with labels from $V(D)$ where:
\begin{itemize}
    \item All leaves of $W$ are initial.
    \item All other vertices of $W$ are gluing.
    \item If $v$ is a child of $u$, with labels $C_v$ and $C_u$ respectively, then either there is an edge from $C_v$ to $C_u$ in $D$ or there is non-loopy union vertex $C$ such that there is an edge from $C_v$ to $C$ and from $C$ to $C_u$.
    \item For all $v$, no two child subtrees of $v$ are (labeled-)isomorphic.
\end{itemize}

Let $\Wc$ be the set of tree walks of $D$. This is finite and computable. This is not completely trivial, but it follows by a simple inductive argument on the height (length of the longest directed path) of $D$; in particular, inductively, a given vertex only has finitely many possible subtrees, since all subtrees use only vertices strictly closer to the initial vertices in $D$. The bound on $\abs{\Wc}$ obtained by this ends up being a power tower whose height depends on the height of $D$.

We define three trees $G_Z,\widehat{W}_Z,$ and $W_Z$, for $Z$ a tree, as follows. To each $u\in V(Z)$, if $u$ has children $v_1,v_2$, let $g_u$ be the gluing vertex corresponding to the gluing edge $(\sigma_M(v_1),\sigma_M(v_2))\to (\sigma_M(v_1), \lambda_u)$, where $\lambda_u$ is the label of $u$ in $Z$. If $u$ is a leaf, then $g_u$ is instead the initial vertex of $D$ containing the initial vertex of $J$ with label $\lambda_u$. Let $G_Z$ be the labeled graph formed by replacing, for each $u\in V(Z)$, the label of $u$ with $g_u$. Let $C_u$ be the vertex of $D$ that corresponds to the scc containing $g_u$. Notice that the subgraph of $Z$ spanned by vertices $v$ with $C_v=C$ is connected (or empty) for each $C\in V(D)$. Let $\widehat{W}_Z$ be the labeled directed (not necessarily binary) tree obtained from $Z$ by contracting each such subgraph to a single vertex, labeling it with the corresponding vertex from $D$ (i.e.\ scc of $\widehat{M}$). This is not yet a tree walk since there may be a vertex with two isomorphic subtrees. Let $W_Z$ be the tree walk obtained by repeatedly deleting any duplicate subtrees in $\widehat{W}_Z$ until none remain. This deletion process results in the same tree walk regardless of the order of the deletions.

Note that it is not the case that every tree walk is the tree walk of some $Z$. This differs from the case for string languages and will require another step at the end of this proof, Claim~\ref{claim:threeprime}.

Let $\Wc_X$ be the set $\{W_Z\mid Z\in L,X\preceq Z\}$. We claim, as in the case of string languages:
\renewcommand{\theclaim}{\arabic{claim}'}
\begin{claim}
For any $X,Y$, there is a $Z\in L$ with $X,Y\preceq Z$ if and only if $\Wc_X\cap\Wc_Y\ne\varnothing$.
\end{claim}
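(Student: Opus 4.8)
The plan is to mirror the structure of Claim~\ref{claim:one} from the string case, but replacing linear walks with tree walks, and handling the substantially harder ``$\Leftarrow$'' direction by an explicit construction of a large witness tree $Z$.

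The easy direction is ``$\Rightarrow$'': if $X,Y\preceq Z$ with $Z\in L$, then $W_Z\in\Wc_X$ and $W_Z\in\Wc_Y$ directly from the definition of $\Wc_X,\Wc_Y$, so the intersection is nonempty. For the converse, suppose $W\in\Wc_X\cap\Wc_Y$. Set $n=\max\{\abs{V(X)},\abs{V(Y)}\}$. I would build a tree $Z$ by ``inflating'' the tree walk $W$: process $W$ from its leaves to its root, and for each vertex of $W$ labeled by an scc $C$ of $J$, substitute a large gadget of $M$-states realizing that scc. Concretely, for a loopy vertex $C$ (which is both gluing and union), I would insert a portion of $Z$ corresponding to a long closed walk in $C\subseteq L(\widehat{M})$ that traverses every vertex of $C$ at least $n$ times; the alternation between gluing and union vertices in this closed walk corresponds, on the $\widehat{M}$ side, to alternately ``gluing'' a new labeled vertex on top of a forest-like state and taking a ``union'' with a freshly built subtree (itself recursively inflated). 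For a non-loopy gluing vertex I use the single corresponding gluing edge once, and leaves of $W$, being initial vertices, correspond to choosing a leaf of $Z$ whose label $\lambda$ has $m_0(\lambda)$ equal to the appropriate state. The reductions of $J$ to initial vertices and the edge/non-loopy-union-vertex condition in the definition of a tree walk guarantee these gadgets can be concatenated consistently: an edge from $C_v$ to $C_u$, possibly through a non-loopy union vertex $C$, tells us exactly how the subtree built for $v$ attaches as a child-subtree when we build the gluing gadget for $u$. Because $W$ is a tree walk all of whose leaves are initial, the resulting $Z$ is run by $M$ exactly along the transitions dictated by $W$, and the root ends in an accepting state (initiality/acceptance being tracked through $J$), so $Z\in L$; moreover $W_Z=W$ by construction, modulo the duplicate-subtree deletion, which is harmless.

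The crux is then an induction, analogous to the sentence ``it is easy to see by induction on $n$ and $\kappa$ that $w\preceq z$ for every string $w$ with $\abs{w}\le n$ and $W\in\Wc_w$'' in the string proof: I claim that for \emph{every} tree $W'$ with $\abs{V(W')}\le n$ and $W\in\Wc_{W'}$, we have $W'\preceq Z$. Given this, since $W\in\Wc_X$ and $W\in\Wc_Y$ and $\abs{V(X)},\abs{V(Y)}\le n$, we get $X\preceq Z$ and $Y\preceq Z$, completing the proof. The induction is on the structure of $W$ (equivalently on the height of $D$): a tree $W'$ with $W=W_{W'}$ decomposes along the scc structure, its vertices mapping into the scc's labeling $W$; each ``block'' of $W'$ lying over a single loopy scc $C$ is a tree using at most $n$ vertices of $C$, and since our gadget traverses every vertex of $C$ at least $n$ times with enough room between visits, we can embed this block topologically into the corresponding gadget of $Z$ (here the edge-disjointness requirement of topological containment is met because distinct visits to vertices of $C$ in the closed walk use distinct pieces of $Z$), and the child-subtrees of $W'$ hanging below this block embed into the appropriate child-subtrees of $Z$ by the inductive hypothesis applied to the lower parts of $D$. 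The requirement in the definition of tree walk that no two child-subtrees of a vertex be isomorphic, combined with the fact that in a topological embedding distinct vertices of $W'$ need distinct images, is what makes the counting ``at least $n$ times'' sufficient rather than needing to track multiplicities more carefully.

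I expect the main obstacle to be making the ``inflation'' construction and the embedding induction genuinely precise: one must carefully set up the correspondence between (i) vertices of $Z$ and (ii) vertices of the closed/finite walks inside the scc's of $L(\widehat{M})$, keeping straight that $\widehat{M}$'s tree-like vertices carry $(\text{state},\text{label})$ data while forest-like vertices carry pairs of states, so that a path alternating gluing and union edges really does encode ``build a subtree, union it in, put a labeled node on top, repeat.'' In particular, unlike the string case where a walk is just a sequence of transitions, here a single step of the walk on the union side demands that an \emph{entire auxiliary subtree} be produced and attached, and the recursion that produces those subtrees must itself be controlled so that the total object stays a finite tree and so that the topological-embedding bookkeeping (edge-disjoint paths, descendant relations preserved) goes through. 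Everything else—finiteness and computability of $\Wc$, and the reduction to minimal semibad pairs—follows the template already established, with the remaining gap (that not every tree walk arises as some $W_Z$) deferred to the later Claim~\ref{claim:threeprime}.
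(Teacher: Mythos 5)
Your forward direction and overall strategy (inflate $W$ into a tree $Z$ and prove by induction that $Z$ topologically contains every small $W'$ with $W\in\Wc_{W'}$) match the paper's outline, but the gadget you propose for a loopy scc $C$ --- a long closed walk in $C$ traversing every vertex at least $n$ times, with subtrees hung off at union steps --- is where the argument breaks. A closed walk in $L(\widehat{M})$ translates into a \emph{spine}: a descending chain of vertices of $Z$ with one ``other'' subtree attached at each union step. But the block of $W'$ that contracts to a single $C$-labeled vertex of $W_{W'}$ is a connected subtree of the binary tree $W'$ and can branch: a vertex $u$ in the block can have both children $v_1,v_2$ still inside $C$. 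You cannot topologically embed such a branching block into a linear spine, no matter how long, because at the branching vertex of $Z$ only one child continues the spine --- the other is a hanging subtree that (in your recursion) realizes some child $W_i$ of $W$, hence a strictly lower scc, not more of $C$. Making the analogy to the string case exact is tempting precisely because strings are unbranched and a long walk suffices there; trees are not, and this is exactly where the difficulty concentrates.

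The paper handles this with a construction you do not have: it introduces the notion of \emph{$m$-containment} (at most $m$ vertices of $U$ land in the subtree of $T$ that contracts to the root of $W_T$) and builds $f_{n,m}(W)$ by a double induction on $m$ and the structure of $W$. The crucial pieces are the trees $T_{\alpha,\beta,\gamma}$ in Case~3b, whose two child subtrees are \emph{both} transformations of $f_{n,m-1}(W)$ --- this is what realizes binary branching inside a loopy scc, and the decreasing parameter $m$ is what keeps the recursion finite. If you instead try to patch your walk gadget by recursively spawning more $C$-material at union steps, you have no termination measure; the $m$-parameter plays that role. A secondary point: the condition that a tree walk has no two isomorphic sibling subtrees constrains $W$, not $W'$; $\widehat{W}_{W'}$ may have duplicate siblings, and these duplicates still need separate images in $Z$, which is again handled by the $m$-bookkeeping rather than by ``repeat each vertex $n$ times.''
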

For the forward direction, if $X,Y\preceq Z\in L$ then $W_Z\in \Wc_X\cap \Wc_Y$. For the reverse direction, suppose that $W\in \Wc_X\cap \Wc_Y$. We will find a tree in $L$ that contains both $X$ and $Y$ topologically. First, we will define a tree $T=f_{n,n}(W)$ such that $T$ contains \textit{all} trees $U$ that have at most $n$ vertices with $W\in \Wc_U$. The meaning of the second ``$n$'' in $f_{n,n}(W)$ will be explained later. It will not be the case in general that $W_T=W$; instead, only $W\preceq W_T$ holds, but this will not be an issue. Taking $n=\max\{\abs{V(X)}, \abs{V(Y)}\}$ will prove the claim aside from the fact that $T$ might not be in $L$, which requires an easy modification at the end.

For each gluing $a,b\in C$, when $C\in V(D)$ is loopy, let $T_{a,b}$ be a $\Lambda$-labeled tree with root $r$ and some vertex $v_{a,b}$ such that the labels of $r$ and $v_{a,b}$ in $G_{T_{a,b}}$ are $b$ and $a$, respectively. We call these trees \textit{transformer trees}. Additionally, for each gluing or initial vertex $a$, let $T_a$ be a tree with root $r$ so that $r$ is labeled $a$ in $G_{T_a}$. We call these trees \textit{base trees}. The tree $f_{n,n}(W)$ will ultimately be formed by ``gluing together'' a bunch of base trees, using transformer trees to modify which gluing vertices get used.

Suppose $C$ is loopy, $T$ has the root of $G_T$ labeled $a\in C$, and $b$ is a given vertex in $C$. Then the tree $T'$ formed by replacing the subtree of $T_{a,b}$ rooted at $v_{a,b}$ with $T$ is called the \textit{$b$-transformation} of $T$; it has the property that the root of $G_{T'}$ is labeled $b$.

Now let us define $f_{n,m}(W)$ inductively in $n,m$ and the depth of $W$, assuming that $W=W_Z$ for some $Z$:
\begin{itemize}
    \item Case 1: Any $n$; any $m$; $W$ has one vertex $u$. Then $u$'s label $C=\{a\}$ is initial in $D$, where $a$ is initial in $J$. Then $f_{n,m}(W)$ is defined to be the base tree $T_a$.
    \item Case 2: Any $n$; $m=0$; $W$ has root $u$ with children $v_1,\dots,v_k$. Let $C_1,\dots,C_k$ be the labels of the children of $u$ and $W_1,\dots,W_k$ the subtrees rooted at the children of $u$.
    \begin{itemize}
        \item Case 2a: $u$'s label $C=\{c\}$ is not loopy. Then by the assumption that $W=W_Z$ is a tree walk, it must be that $u$ has exactly two children (i.e.\ $k=2$), and for $i=1,2$ there is some gluing or initial $b_i\in C_i$ so that there is some union vertex $b'$ and an edge from $b_i$ to $b_i'$ and $b_i'$ to $c$ in $J$. If $C_i$ is loopy, let $T_i$ be the $b_i$-transformation of $f_{n,n}(W_i)$; otherwise, let $T_i=f_{n,n}(W_i)$. Let $c$ correspond to the gluing edge $(s_1,s_2)\to(t,\lambda)$. Then $f_{n,0}(W)$ is the tree given by a root of label $\lambda$ with child subtrees $T_1$ and $T_2$.
        \item Case 2b: $C$ is loopy. Let $c_1,\dots,c_\ell$ be the gluing vertices in $C$. For $1\le \alpha\le \ell$; $1\le i,j\le k$; and gluing vertices $b_\beta\in C_i$ and $b_\gamma\in C_j$:
        \begin{itemize}
            \item If there is a union vertex $b'$ such that there is an edge from $b_\beta$ to $b'$, from $b_\gamma$ to $b'$, and from $b'$ to $c_\alpha$ with label $\lambda$, then let $T_{\alpha,i,j,\beta,\gamma}$ be the tree formed by a vertex with label $\lambda$ and child subtrees given by the $b_\beta$-transformation of $f_{n,n}(W_i)$ and $b_\gamma$-transformation of $f_{n,n}(W_j)$.
            \item Otherwise let $T_{\alpha,i,j,\beta,\gamma}$ be the empty graph.
        \end{itemize}
        Let $U$ be the disjoint union of all of the $T_{\alpha,i,j,\beta,\gamma}$. Suppose $U$ has multiple connected components $T_1$ and $T_2$. Let $b_1,b_2,b\in C$, $t_1,t_2\in \Sigma$, and $\lambda_1,\lambda_2,\lambda\in\Lambda$ be such that for $p=1,2$, $b_p$ is a gluing edge to tree-like $(t_p,\lambda_p)$, and $b$ is a gluing edge from $(t_1,t_2)$ labeled $\lambda$. Then in $U$, replace $T_p$ with the $b_p$-transformation $T_p'$, then add a vertex $x$ with label $\lambda$ and make $T_1'$ and $T_2'$ be $x$'s child subtrees. (In other words, take appropriate transformations of $T_1$ and $T_2$ so that they may be made subtrees of a new vertex $x$ while remaining in the component $C$.) Repeat this process for as long as $U$ has multiple connected components. After all connected components of $U$ are combined in this manner, the resulting tree is $f_{n,0}(W)$.
    \end{itemize}
    \item Case 3: Any $n$; $m>0$; $W$ has root $u$ with children $v_1,\dots,v_k$. Let $C_1,\dots,C_k$ and $W_1,\dots,W_k$ be as before.
    \begin{itemize}
        \item Case 3a: $u$'s label $C$ is not loopy. Then $f_{n,m}(W)$ is just $f_{n,0}(W)$.
        \item Case 3b: $C$ is loopy. Let $c_1,\dots,c_\ell$ be the gluing vertices in $C$. For $1\le \alpha,\beta\le \ell$; $1\le i\le k$; and gluing vertex $b_\gamma\in C_i$:
        \begin{itemize}
            \item If there is a union vertex $b'$ such that there is an edge from $c_\beta$ to $b'$, from $b_\gamma$ to $b'$, and from $b'$ to $c_\alpha$ with label $\lambda$, then let $T_{\alpha,\beta,i,\gamma}$ be the tree formed by a vertex with label $\lambda$ and child subtrees given by the $c_\beta$-transformation of $f_{n,m-1}(W)$ and the $b_\gamma$-transformation of $f_{n,n}(W_i)$.
            \item Otherwise, $T_{\alpha,\beta,i,\gamma}$ is the empty graph.
        \end{itemize}
        Additionally, for $1\le \alpha,\beta,\gamma\le \ell$:
        \begin{itemize}
            \item If there is a union vertex $b'$ such that there is an edge from $c_\beta$ to $b'$, from $c_\gamma$ to $b'$, and $b'$ to $c_\alpha$ with label $\lambda$, then let $T_{\alpha,\beta,\gamma}$ be the tree formed by a vertex with label $\lambda$ and child subtrees given by the $c_\beta$-transformation of $f_{n,m-1}(W)$ and the $c_\gamma$-transformation of $f_{n,m-1}(W)$.
            \item Otherwise, $T_{\alpha,\beta,\gamma}$ is the empty graph.
        \end{itemize}
        Let $U$ be the disjoint union of all the $T_{\alpha,\beta,i,\gamma}$, all the $T_{\alpha,\beta,\gamma}$, and $f_{n,0}(W)$. Connect the trees together as was done at the end of case 2b. The resulting tree is $f_{n,m}(W)$.
    \end{itemize}
\end{itemize}
Say that $T$ \textit{$m$-contains} $U$ if it contains $U$ in a way such that at most $m$ vertices of $U$ are found in the subtree of $T$ that gets contracted to the root of $W_T$. Note that $U\preceq T$ if and only if $T$ $n$-contains $U$ where $n=\abs{V(U)}$. We will verify by induction on $n$, $m$, and $W$ that $f_{n,m}(W)$ contains all trees $U$ with at most $n$ vertices with the property that there is a $T'$ that $m$-contains $U$ with $W_{T'}=W\in \Wc_U$. The base case corresponds to case 1 above, and there are four cases for the inductive step, corresponding to cases 2a, 2b, 3a, and 3b above. To reduce monotony, we will just prove the base case and fourth inductive step case, which is the most complicated; all of the other inductive step cases follow similar logic.

Base case: If $W$ has one vertex $C$, then since $W=W_Z$, that vertex is initial. Initial vertices are never loopy, so the scc $C$ has just one vertex of $J$, say $a$. Then the only tree $T'$ with $W_{T'}=W$ is the base tree $T_a$, which actually must be just a single vertex with label equal to the label of $a$ in $J$. Therefore $f_{n,m}(W)$ has the desired property (for any $n$ and $m$).

Inductive step: Suppose that $T'$ is a tree that $m$-contains $U$ with $W_{T'}=W\in\Wc_U$, and $W$ has more than one vertex. We need to prove that $f_{n,m}(W)$ contains $U$. Let $\lambda$ be the label of the root of $U$. Let $x$ be the root of $W$; $C$ be the label of $x$; $C_1,\dots,C_k$ be the labels of the children of $x$; and $W_1,\dots,W_k$ be the subtrees rooted at the children of $x$, as before.

Cases 1-3: Omitted; they are all fairly similar to case 4 below. Case 1 is when $m=0$ and $C$ is not loopy, case 2 is when $m=0$ and $C$ is loopy, and case 3 is when $m>0$ and $C$ is not loopy.

Case 4: $m>0$ and $C$ is loopy. There are several subcases to consider:
\begin{itemize}
    \item Subcase 4a: $T'$ actually 0-contains $U$. Then by inductive hypothesis, $f_{n,0}(W)$ contains $U$, and by construction, $f_{n,m}(W)$ contains $f_{n,0}(W)$ so it also contains $U$.
    \item Subcase 4b: $T'$ $m$-contains $U$ but does not 0-contain $U$. In such an $m$-containment, let $T_1$ and $T_2$ be the subtrees of $T'$ that contain $U$'s root's children's subtrees $U_1$ and $U_2$, respectively, chosen so that the roots of $U_1$ and $U_2$ get mapped to the roots of $T_1$ and $T_2$. Note that $T_1$ and $T_2$ are not necessarily distinct; they may both be all of $T'$, for instance. There are now several subsubcases to consider:
    \begin{itemize}
        \item Subsubcase 4b(i): $T_1$ and $T_2$ are both proper subtrees of $T'$. Then let $t_1$ and $t_2$ be the roots of $T_1$ and $T_2$, respectively. By the assumption that $T'$ does not 0-contain $U$, the last common ancestor of $t_1$ and $t_2$ in $T'$ is the root $t$ of $T'$. Let $s_1$ and $s_2$ be the children of $t$ so that $s_1$ is an ancestor of (or equal to) $t_1$ and $s_2$ is an ancestor of (or equal to) $t_2$. Consider now the scc's $C_1'$ and $C_2'$ containing $g_{s_1}$ and $g_{s_2}$, respectively. These are equal to $C_i$ and $C_j$ for some $i$ and $j$. Then $T_{\alpha,i,j,\beta,\gamma}$ (part of the construction in case 2b) is nonempty and has root labeled $\lambda$ for some $\alpha,\beta,\gamma$, so $f_{n,0}(W)$ contains $U$, so $f_{n,m}(W)$ contains $U$.
        \item Subsubcase 4b(ii): $T_1=T'$ and $T_2$ is a proper subtree of $T'$. Then $T_1$ $(m-1)$-contains $U_1$. Let $s_2$ be the child of the root of $T'$ such that $t_2$ is a descendent of $s_2$, and let $C_2'$ be the scc containing $g_{s_2}$, as before. Then $C_2'=C_i$ for some $i$, so $T_{\alpha,\beta,i,\gamma}$ is nonempty and has root labeled $\lambda$ for some $\alpha,\beta,\gamma$. Then this tree, which is a subtree of $f_{n,m}(W)$, contains $U$, since $f_{n,m-1}(W)$ contains $U_1$ and $f_{n,n}(W_i)$ contains $U_2$ by inductive hypothesis.
        \item Subsubcase 4b(iii): $T_1$ is a proper subtree of $T'$ and $T_2=T'$. This is the same as the previous bullet point but with the roles of $T_1$ and $T_2$ swapped.
        \item Subsubcase 4b(iv): $T_1=T_2=T'$. Then $T_1$ $(m-1)$-contains $U_1$ and $T_2$ $(m-2)$-contains $U_2$. We have that $T_{\alpha,\beta,\gamma}$ is nonempty and has root labeled $\lambda$ for some $\alpha,\beta,\gamma$. Then this tree, which is a subtree of $f_{n,m}(W)$, contains $U$, since $f_{n,m-1}(W)$ contains both $U_1$ and $U_2$ by inductive hypothesis.
    \end{itemize}
\end{itemize}

This completes the induction, so we have that $f_{n,n}(W)$ contains all $U$ with $W\in\Wc_U$ and at most $n$ vertices, as desired. By construction, the root of $W_{f_{n,n}(W)}$ has the same label as the root of $W$. By assumption that $W\in \Wc_X$, we have $W=W_Z$ for some $Z\in L$, so this label $C$ has an accepting gluing vertex $a$. If $C$ is not loopy then $C=\{a\}$, so take $Z=f_{n,n}(W)$; otherwise, take $Z$ to be the $a$-transformation of $f_{n,n}(W)$. In either case we have $Z\in L$ and $Z$ contains both $X$ and $Y$ if $n=\max\{\abs{V(X)},\abs{V(Y)}\}$, completing the proof of the claim.
\qqed

\begin{claim}\label{claim:twoprime}
Consider $\Wc_X$ for arbitrary $X$. There is an $X^*$ with $\abs{V(X^*)}\le 2^{2^{\abs{\Wc}}}$ and $\Wc_X=\Wc_{X^*}$.
\end{claim}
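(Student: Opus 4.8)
The plan is to imitate the proof of the second claim in the proof of Lemma~\ref{lem:strings} (the string analogue of the present claim): build a deterministic bottom-up tree automaton $N$, with state set a subset of $2^{\Wc}$, whose run on a tree $X$ records exactly the data $\Wc_X$, and then shrink $X$ by a pumping argument without changing $N(X)$ (hence without changing $\Wc_X$).

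For Step~1 (the automaton), the states of $N$ will be sets of ``partial tree walks'': rooted, not-necessarily-binary trees over $V(D)$ satisfying the four tree-walk conditions of the previous subsection, except that the root is only required to be a gluing vertex (or initial, if the tree is a single vertex). Every such object is itself a tree walk in the sense already defined (tree walks carry no accepting condition), so there are at most $\abs{\Wc}$ partial tree walks and at most $2^{\abs{\Wc}}$ states. The intended meaning of $N(X)$ is: the set of all partial tree walks that arise as $W_{Z'}$ for some tree $Z'$ with $X\preceq Z'$. The leaf transition $m_0^N(\lambda)$ and the binary transition $m_2^N(\lambda,S_1,S_2)$ are defined explicitly, exactly paralleling the string case: at a binary node one takes a partial tree walk from $S_1$ and one from $S_2$, joins them through an allowed $D$-edge (possibly inserting a non-loopy union vertex, and possibly first looping inside a loopy gluing scc), and records the new label $\lambda$; one also permits each child's partial tree walk to be extended upward along $D$ first, to absorb the filler that $Z'$ may carry between the images of $X$'s vertices. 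One then checks, by induction on the structure of $X$ and using the inflation construction $f_{n,m}$ already built, that $N(X)$ is indeed this set and that
\[ \Wc_X=\{\,W\in\Wc: \text{the root of } W \text{ is an accepting gluing vertex, and some subtree of } W \text{ belongs to } N(X)\,\}. \]
In particular $N$ is a well-defined deterministic bottom-up automaton, and $\Wc_X$ is both recoverable from $N(X)$ and computable (by running $N$).

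Step~2 (pumping) is then routine. Since $N$ has at most $2^{\abs{\Wc}}$ states, if $X$ has a root-to-leaf path through more than $2^{\abs{\Wc}}$ vertices, then two vertices $u,v$ on it, with $v$ inside the subtree at $u$, satisfy $N(X_u)=N(X_v)$ for the corresponding subtrees $X_u,X_v$; replacing $X_u$ by $X_v$ yields a strictly smaller $X'$ with $N(X')=N(X)$, hence $\Wc_{X'}=\Wc_X$. Iterating gives an $X^\ast$ in which every root-to-leaf path has at most $2^{\abs{\Wc}}$ vertices, and a binary tree of that height has at most $2^{2^{\abs{\Wc}}}-1$ vertices, which proves the claim. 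Combined with Claim~\ref{claim:one}'s counterpart this bounds minimal semibad pairs and lets one enumerate $R_L$ and build a recognizer for $B_L'$, exactly as at the end of the string case.

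The main obstacle is Step~1, and specifically getting $m_2^N$ right so that $N(X)$ is a genuine bottom-up invariant that faithfully captures $\Wc_X$: one must verify that every way a larger tree $Z'\succeq X$ can route the images of $X$'s vertices through $D$ — interleaving them with arbitrary filler subtrees in any topological position — is reflected by a transition of $N$, and conversely. This is essentially the bottom-up dual of the case analysis (Cases~2a--3b) already carried out for the trees $f_{n,m}(W)$, so the same ideas should suffice, but it is the one place where real work is needed; the determinization and pumping of Step~2 are standard and the state bound $2^{\abs{\Wc}}$ is immediate.
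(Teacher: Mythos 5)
Your overall strategy mirrors the paper's exactly: build a deterministic bottom-up tree automaton $N$ whose states are subsets of $\Wc$, establish that $\Wc_X$ is determined by $N(X)$, and then pump a deep root-to-leaf path to shrink $X$ without changing $N(X)$. The state count $2^{\abs{\Wc}}$ and the resulting bound $2^{2^{\abs{\Wc}}}$ (or $2^{2^{\abs{\Wc}}}-1$) agree, and the pumping step is correct. So this is the paper's proof in all essentials.

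There is one genuine gap, which the paper itself explicitly flags as the place where the tree case departs from the string case. Your recovery formula
\[
\Wc_X=\{\,W\in\Wc: \text{root of } W \text{ accepting, some prefix of } W \text{ is in } N(X)\,\}
\]
is missing a third condition: that $W$ is \emph{realizable}, i.e.\ $W=W_Z$ for some tree $Z$. Unlike in the string case, not every tree walk in $\Wc$ arises as $W_Z$ for some $Z$; a tree walk $W$ can have an accepting root and a realizable prefix sitting in $N(X)$ and yet not be $W_Z$ for any $Z$ at all, in which case $W\notin\Wc_X$ despite passing your test. The paper's proof states the recovery as ``a prefix of $W$ is in $N(X)$, the root scc is accepting, \emph{and} there is some $Z$ with $W=W_Z$,'' and devotes Claim~\ref{claim:threeprime} to giving a computable test for this last condition. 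Your ``intended meaning'' of $N(X)$ as $\{W_{Z'}:X\preceq Z'\}$ would make realizability automatic for the elements of $N(X)$, but then your formula should ask $W\in N(X)$ rather than ``some prefix of $W$ in $N(X)$''; as written, the two halves of your Step~1 are inconsistent with each other, and the clean version that a bottom-up automaton actually computes is the syntactic, prefix-tracking one — which forces the realizability condition back in.

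For the bound claimed here this gap is not fatal: the pumping only needs that $\Wc_X$ is \emph{some} function of $N(X)$, and once the recovery formula is stated correctly (with all three conditions) that remains true. But you should add the realizability condition and note, as the paper does, that it is the genuinely new ingredient relative to Lemma~\ref{lem:strings}; it is also indispensable for the downstream computability of $R_L$, so it cannot be swept into the ``get $m_2^N$ right'' remark you make at the end.
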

Suppose that $\abs{V(X)} > 2^{2^{\abs{\Wc}}}$. We will find a smaller $X'$ with $\Wc_{X'}=\Wc_X$. First, say a tree walk $W'$ is a \textit{prefix} of a tree walk $W$ if it is a contiguous subtree of $W$ and the leaves of $W'$ are leaves of $W$. Define a deterministic tree automaton $N$ as follows:
\begin{itemize}
    \item The alphabet of $N$ is the same as $M$, i.e.\ $\Lambda$.
    \item The states of $N$ are $2^\Wc$.
    \item The function ``$m_0$'' for $N$, which we will call $n_0$ to avoid confusion with the $m_0$ function associated with $M$, is given by sending each $\lambda\in \Sigma$ to $\{C_\lambda\}$ where $C_\lambda$ indicates the tree walk of one vertex where that vertex is labeled according to the scc of $J$ containing the initial vertex associated to $\lambda$.
    \item The function ``$m_2$'' for $N$, which we will call $n_2$ instead, is given by
    \begin{align*}
        n_2(\lambda, S_1, S_2) = \{W\mid{}&\text{there are $W_1\in S_1,W_2\in S_2$ such that $W$ has prefixes $W_1,W_2$ and} \\
        &\text{$W$ has root labeled $C_i$ and some gluing vertex $b\in C_i$ has label $\lambda$}\}.
    \end{align*}
\end{itemize}
The accepting states of $N$ are irrelevant for our purposes. Then it is the case that $W\in \Wc_X$ if and only if all of the following are true:
\begin{itemize}
    \item a prefix of $W$ is in $N(X)$,
    \item the scc corresponding to the root of $W$ is accepting,
    \item there is some $Z$ with $W=W_Z$.
\end{itemize}
This may be proven easily by induction on $\abs{V(X)}$; we omit the details. Note that the third condition did not appear in the proof for string languages because all walks in that proof were realized by some string $z$.

The tree $X$ is so large that it has depth greater than $2^{\abs{\Wc}}$, so it has vertices $u_1,u_2$ with $u_2$ a descendent of $u_1$ and $\sigma_N(u_1)=\sigma_N(u_2)$. Then take $X'$ to be the (smaller) tree obtained from $X$ by replacing the subtree at $u_1$ with the subtree at $u_2$. We have $N(X)=N(X')$ so $\Wc_X=\Wc_{X'}$.
\qqed

\begin{claim}\label{claim:threeprime}
For any $Z$, there is a $Z^*$ with $\abs{V(Z^*)}\le 2^{\abs{\Wc}\cdot \abs{\Sigma}\cdot \abs{\Lambda}}$ and $W_Z=W_{Z^*}$.
\end{claim}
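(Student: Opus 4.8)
The plan is a pumping argument along root-to-leaf paths, in the spirit of the proof of Claim~\ref{claim:twoprime} but with a more economical automaton state. The key preliminary observation is that $W_Z$ depends on $Z$ only through the coloring $u\mapsto C_u$: by definition $\widehat W_Z$ is the contraction of the maximal connected monochromatic subgraphs of $(Z,(C_u)_u)$, and $W_Z$ is its reduction (repeated deletion of duplicate child subtrees). Writing $Z_u$ for the subtree of $Z$ rooted at $u$ and $\omega(u)$ for $W_{Z_u}$, one verifies by induction that $\omega$ obeys the following recursion: $\omega(u)$ is the one-vertex tree labeled $C_u$ when $u$ is a leaf, and otherwise $\omega(u)$ is the reduction of the tree whose root is labeled $C_u$ and whose children are obtained, for each child $v$ of $u$, by taking $\omega(v)$ itself if $C_v\ne C_u$ and by taking the children of the root of $\omega(v)$ if $C_v=C_u$ (this second case recording that $u$'s block absorbs $v$'s block; here one uses the already-noted fact that the vertices of a fixed color span a connected subtree of $Z$). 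Consequently $\omega(u)$ is a function of $C_u,\omega(v_1),\omega(v_2)$ only, and since $C_u$ is in turn a function of $\sigma_M(v_1),\sigma_M(v_2),\lambda_u$, the pair $(\sigma_M(u),\omega(u))$ is computable bottom-up; regard it as the state of a deterministic tree automaton $P$ run on $Z$, which has at most $\abs{\Sigma}\cdot\abs{\Wc}$ states.

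Next I would prove a \emph{splicing lemma}: if $u_2$ is a strict descendant of $u_1$ in $Z$ and $P$ assigns them equal states, i.e.\ $\sigma_M(u_1)=\sigma_M(u_2)$ and $\omega(u_1)=\omega(u_2)$, then the tree $Z'$ obtained by replacing the subtree $Z_{u_1}$ with $Z_{u_2}$ satisfies $W_{Z'}=W_Z$ (and evidently $\abs{V(Z')}<\abs{V(Z)}$). Since $M$ is deterministic bottom-up and the grafted subtree has root state $\sigma_M(u_2)=\sigma_M(u_1)$, every vertex strictly above $u_1$ keeps its $M$-state, hence keeps its label and its $g$-value, hence keeps its color $C$. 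Moreover the color at position $u_1$ in $Z'$, namely $C_{u_2}$, agrees with the color $C_{u_1}$ it had in $Z$, because $C_{u_1}$ and $C_{u_2}$ are the root labels of $\omega(u_1)=\omega(u_2)$. Feeding these unchanged colors, together with the unchanged value $\omega(u_1)=\omega(u_2)$ at position $u_1$, into the recursion and inducting up the path from $u_1$ to the root $r$ of $Z$, every $\omega$-value along that path is unchanged; in particular $W_{Z'}=\omega(r)$ equals $W_Z$. The whole point of the recursive description of $\omega$ is that this argument needs no case analysis on whether the block of $u_1$ reaches above $u_1$, which would be the awkward case to handle working directly with $\widehat W_Z$.

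Finally, apply the pigeonhole principle. If $Z$ has a root-to-leaf path with more than $\abs{\Sigma}\cdot\abs{\Wc}$ vertices, that path contains two vertices $u_1$ (an ancestor) and $u_2$ (a descendant) with the same $P$-state, and the splicing lemma produces a strictly smaller $Z'$ with $W_{Z'}=W_Z$. Iterating (the size strictly decreases, so the process terminates) yields a $Z^*$ with $W_{Z^*}=W_Z$ in which every root-to-leaf path has at most $\abs{\Sigma}\cdot\abs{\Wc}$ vertices; a binary tree with this property has fewer than $2^{\abs{\Sigma}\cdot\abs{\Wc}}$ vertices, and since $\abs{\Lambda}\ge 1$ this is at most $2^{\abs{\Wc}\cdot\abs{\Sigma}\cdot\abs{\Lambda}}$, as required. (In fact the argument gives the slightly stronger bound $2^{\abs{\Sigma}\cdot\abs{\Wc}}$.)

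I expect the main obstacle to be the first step: a careful proof of the recursive formula for $\omega$, equivalently that $W_Z$ is determined by the coloring $(C_u)_u$. The subtlety is the interaction of block-contraction (which can merge a vertex's block into an ancestor's) with the duplicate-deletion step, so that the clause ``$C_v=C_u$: absorb $v$'s children into $u$'s block'' remains faithful even after reductions are performed at every level of the tree walk. Once that lemma is in hand, the splicing lemma and the counting step are routine and closely mirror the proof of Claim~\ref{claim:twoprime}.
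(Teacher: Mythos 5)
Your proof is correct and follows the same pigeonhole-and-splice strategy as the paper. The paper simply pigeonholes along a root-to-leaf path on the triple $(\sigma_M(u),\lambda_u,W_{Z_u})$, giving the stated $2^{\abs{\Wc}\cdot\abs{\Sigma}\cdot\abs{\Lambda}}$ bound, and asserts in one sentence that replacing $Z_u$ by $Z_v$ preserves $W_Z$; your version makes that assertion precise via the recursion for $\omega$, and observes along the way that the coordinate $\lambda_u$ is redundant once you know $\omega(u)$ (since $C_u$ is recoverable as the root label of $\omega(u)$ and is all that the block structure of the ancestors sees), so you get the marginally sharper bound $2^{\abs{\Sigma}\cdot\abs{\Wc}}$. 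The one thing worth being careful about, which you correctly flag, is the $\omega$-recursion itself: it rests on (i) the local fact that a child $v$ of $u$ with $C_v=C_u$ has its root block absorbed into $u$'s (immediate from the tree structure, not the global connectivity claim), and (ii) confluence of the duplicate-subtree deletion, which the paper states right after defining $W_Z$ and which licenses reducing the $\omega(v_i)$ before re-reducing at $u$. With those two ingredients the splicing lemma and the counting step are exactly as in the paper's proof of Claim~\ref{claim:twoprime}.
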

Suppose that $\abs{V(Z)}>2^{\abs{\Wc}\cdot \abs{\Sigma}\cdot \abs{\Lambda}}$. Then $Z$ has depth greater than $\abs{\Wc}\cdot \abs{\Sigma}\cdot \abs{\Lambda}$, so it has two vertices $u$ and $v$ so that:
\begin{itemize}
    \item $v$ is a descendent of $u$,
    \item $\sigma_M(v)=\sigma_M(u)$,
    \item $u$ and $v$ have the same label, and
    \item $W_{Z_u}=W_{Z_v}$ where $Z_x$ is the subtree rooted at $x$.
\end{itemize}
Then replacing the subtree rooted at $u$ with the subtree rooted at $v$ gives a smaller tree $Z'$ with $W_{Z'}=W_Z$.
\qqed

Claim~\ref{claim:twoprime} implies that in any minimal $L$-semibad pair $(X,Y)$, both $X$ and $Y$ have at most $2^{2^{\abs{\Wc}}}$ vertices (in fact, we have proved that they have depth at most $2^{\abs{\Wc}}$). The proof of Claim~\ref{claim:twoprime}, along with Claim~\ref{claim:threeprime}, also gives us a way to compute $\Wc_X$ for any tree $X$: $\Wc_X$ is the set of tree walks $W$ with a prefix in $N(X)$ and accepting root label such that $W=W_Z$ for some $Z$. Then we can compute the set of minimal $L$-semibad pairs $R_L$, which allows us to construct an automaton to recognize $B_L'$ since $B_L'$ is equal to
\[ (\Tc_k\hash\Tc_k)\cap\bigcap_{X,Y\in R_L}\overline{L_X\hash L_Y} \]
where $L_T$ is the set of strings $Z$ with $T\preceq Z$, which is explicitly regular.
\end{proof}

The upper bounds obtained from this proof are much larger than the bounds we obtained for string languages. Indeed, $\abs{\Wc}$ is bounded by a power tower whose height is a function of the number of states in $M$, so the size of minimal (semi)bad pairs and the size of the smallest automaton recognizing $B_L$ is only upper bounded by a power tower-type expression.

As an immediate consequence of this result:

\begin{corollary}
Fix an explicitly regular tree language $L$. There is a linear time algorithm to determine if a given pair $(X,Y)$ is $L$-(semi)bad or not.
\end{corollary}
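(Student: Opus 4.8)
The plan is to reduce the claim to the main technical construction via Lemma~\ref{lem:main2} and its proof, which already provides all the ingredients we need. The key observation is that by the proof of Lemma~\ref{lem:main2}, whether a pair $(X,Y)$ is $L$-semibad is determined entirely by the pair of sets $(\Wc_X, \Wc_Y)$: by Claim~\ref{claim:threeprime}' (really Claim~\ref{claim:one}' in the tree case), $(X,Y)$ is semibad if and only if $\Wc_X\cap\Wc_Y=\varnothing$. So it suffices to exhibit a linear-time procedure that computes $\Wc_X$ from $X$, and then the test $\Wc_X\cap\Wc_Y=\varnothing$ is performed in constant time (both sets are subsets of the fixed finite set $\Wc$). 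For the bad (as opposed to semibad) case, we additionally check in linear time whether $X,Y\in L$ by running the fixed automaton $M$ recognizing $L$; if either fails to be in $L$, the pair is not $L$-bad regardless. So the whole algorithm is: run $M$ on $X$ and $Y$, compute $\Wc_X$ and $\Wc_Y$, intersect, and combine.

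The remaining point is that $\Wc_X$ can be computed in linear time. Here I would invoke the automaton $N$ from the proof of Claim~\ref{claim:twoprime}': $N$ is a fixed deterministic bottom-up tree automaton (depending only on $L$, not on $X$) whose states are subsets of $\Wc$, and running $N$ on $X$ visits each vertex of $X$ once, performing a constant-time transition (a lookup in the fixed transition table of $N$) at each. This computes $N(X)$ in time $O(\abs{V(X)})$. From $N(X)$ we recover $\Wc_X$ as the set of tree walks $W\in\Wc$ that have a prefix in $N(X)$, have accepting root label, and satisfy $W = W_Z$ for some $Z$; since $\Wc$ is a fixed finite set and the latter three conditions can each be precomputed once and for all (they do not depend on $X$), this final extraction is $O(1)$. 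Hence $\Wc_X$ is obtained in linear time.

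Assembling the pieces: given $(X,Y)$, run $M$ and $N$ on each of $X$ and $Y$ in linear time; read off $X,Y\in L$ and $\Wc_X,\Wc_Y$; the pair is $L$-semibad iff $\Wc_X\cap\Wc_Y=\varnothing$, and $L$-bad iff additionally $X,Y\in L$. Since the sizes of $M$, $N$, and $\Wc$ are all constants (the language $L$ is fixed), every step beyond the two linear-time automaton runs is constant-time, giving an $O(\abs{V(X)}+\abs{V(Y)})$ algorithm overall.

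I do not expect a genuine obstacle here, since the heavy lifting — establishing that $\Wc_X$ determines semibadness, and that $\Wc_X$ is computable — is exactly the content of Lemma~\ref{lem:main2}'s proof. The only thing to be careful about is making precise that "running a fixed automaton" is linear, i.e.\ that the transition table of $N$, though of size exponential in $\abs{\Wc}$, is a \emph{constant} once $L$ is fixed, so table lookups are $O(1)$; this is where the phrase "the hidden constants make this algorithm highly impractical" from the introduction becomes relevant and should be acknowledged. An analogous remark handles the string case via the automaton $N$ in the proof of Lemma~\ref{lem:strings}.
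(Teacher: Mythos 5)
Your proposal is correct but takes a somewhat different, more explicit route than the paper. The paper's proof is a one-liner: by Lemma~\ref{lem:main2} (and Lemma~\ref{lem:main}), there is a tree automaton $M'$ recognizing $B_L'$ (resp.\ $B_L$), and since $M'$ is fixed once $L$ is fixed, running $M'$ on the single tree $X\hash Y$ takes $O(\abs{V(X)}+\abs{V(Y)})$ time. You instead unwind the machinery inside Lemma~\ref{lem:main2}'s proof: you run the helper automaton $N$ (from Claim~\ref{claim:twoprime}') directly, extract $\Wc_X$ and $\Wc_Y$ from $N(X)$ and $N(Y)$ together with the accepting/realizability precomputations, invoke Claim 1' to test $\Wc_X\cap\Wc_Y=\varnothing$, and separately run $M$ for the membership check needed to pass from semibad to bad. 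Both arguments are valid and give linear time for the same underlying reason (all $L$-dependent objects are constants once $L$ is fixed). What the paper's route buys is brevity and abstraction — it treats $B_L'$ as a black-box regular language — while your route makes the algorithm more concrete and exposes which pieces of the construction are actually doing the work, at the cost of having to re-verify that the extraction of $\Wc_X$ from $N(X)$ is constant-time. One small bookkeeping note: what you cite as ``Claim~\ref{claim:threeprime}' (really Claim~\ref{claim:one}')'' is indeed the primed analogue of Claim~\ref{claim:one} in the tree proof; getting the label right is worth the care, but your usage of its content is correct.
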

\begin{proof}
Construct the tree automaton $M'$ recognizing $B_L$ (for bad pairs) or $B_L'$ (for semibad pairs). Then run $M'$ on input $X\hash Y$, which takes $O(\abs{V(X)}+\abs{V(Y)})$ time.
\end{proof}
Of course, the hidden constant of this algorithm is phenomenally large, so this is not actually a practical algorithm for recognizing (semi)bad pairs.

\section{Applications}
\label{sec:cor}

\subsection{General trees}

It is not too difficult to extend the proof of Theorem~\ref{thm:trees} (or Lemma~\ref{lem:main2}) to work for sets of trees with any bounded arity, not just binary trees. However, this argument does not immediately extend to general trees, with unbounded arity, since the definition of (finite) tree automata does not immediately extend to this case. We can still recover this result, which was Corollary~\ref{cor:generaltrees} in the introduction, with a simple trick:

\begin{proof}[Proof of Corollary~\ref{cor:generaltrees}]
To each general tree with $k$ possible labels, associate a binary tree with $k+1$ labels as follows. Call the extra label $\uparrow$. Replace each vertex $u$ with $x$ children $v_1,\dots,v_x$ with a chain $u_1,\dots,u_{x-1}$ so that the children of $u_i$ are $u_{i+1}$ and $v_i$ for $i<x-1$ and $v_{x-1}$ and $v_x$ for $i=x-1$. The label of $u_1$ is the label of $u$ and the labels for all of the other $u_i$ are $\uparrow$. Note that this binary tree is not unique; it depends on the order chosen for the children of each vertex. Nevertheless, it is the case that the language of binary trees corresponding to $S$-free general trees is explicitly regular for any finite $S$, though we omit a detailed construction. The result then follows from Lemma~\ref{lem:main}.
\end{proof}

Various extensions to this corollary are possible by modifying the tree language constructed in the proof. Of particular interest, we can consider general trees whose leaf labels are restricted to a subset of the possible labels; this will be used in the proof of Corollary~\ref{cor:p4free} below.

\subsection{Cographs}

A classic result is that any $P_4$-free graph (aka \textit{cograph}) aside from the empty graph is uniquely associated to a \textit{cotree}, which is a general tree with three kinds of labels called $\sqcup$, $\vee$, and $K_1$. Leaves are labeled $K_1$, non-leaves are labeled either $\sqcup$ or $\vee$, and no two adjacent vertices have the same label. Associate to each vertex of the tree a cograph according to the vertex label and the cographs associated to the children: leaves $K_1$ are associated to the graph $K_1$, vertices labeled $\sqcup$ are associated to the disjoint union of the graphs associated to their child subtrees, and vertices labeled $\vee$ are associated to the ``join'' of their child subtrees. The \textit{join} of graphs $G_1,\dots,G_n$ is the graph obtained from $\bigsqcup G_i$ by adding every possible edge between $G_i$ and $G_j$ for $i\ne j$.

It is the case that if $G_1$ and $G_2$ are cographs and $T_1$ and $T_2$ are the cotrees associated to $G_1$ and $G_2$, respectively, then $G_1\subseteq G_2$ if and only if $T_1\preceq T_2$ \cite{cotree}. Now a proof of Corollary~\ref{cor:p4free} may be obtained by the same idea as the proof of Corollary~\ref{cor:generaltrees} but modifying the tree language so that the only allowed leaf label is $K_1$, no non-leaf is labeled $K_1$, and no two adjacent vertices have the same label (all of these modifications still result in an explicitly regular tree language).

\subsection{Bounded treewidth families}

Let $(T,X)$ be a tree decomposition of $G$, i.e.
\begin{itemize}
    \item $T$ is a tree and $X:V(T)\to 2^{V(G)}$ is a function.
    \item For all $v\in V(G)$, there is a $t\in V(T)$ with $v\in X(t)$.
    \item For all $uv\in E(G)$, there is a $t\in V(T)$ with $u,v\in X(t)$.
    \item For all $v\in V(G)$, $X^{-1}(v)$ is a connected subgraph of $T$.
\end{itemize}
The sets $X(t)$ for $t\in V(T)$ are called \textit{bags} of the tree decomposition. We may further assume, by duplicating certain vertices of $T$, that $T$ is a binary tree. We say that a family $\Fc$ has \textit{bounded treewidth} if every graph $G\in\Fc$ admits a tree decomposition $(T,X)$ with $\max\{\abs{X(t)}\}\le c$, $c$ a constant. In this case, tree decompositions may be viewed as vertex- and edge-labeled rooted binary trees, with the possible vertex labels being the graphs on $\le c$ vertices (\textit{not} up to isomorphism, i.e.\ these graphs are labeled with labels $1,\dots,c$) and the possible edge labels being the set of bipartite graphs on $\le c+c$ vertices (again, not up to isomorphism; the edge labels are used to match vertices in adjacent bags that correspond to the same vertex in $G$). By subdividing edges and adding dummy leaves, and choosing an arbitrary non-leaf as the root, we may then associate to a tree decomposition a vertex-labeled rooted binary tree, i.e.\ what we have been calling a ``tree'' in this paper.

It is the case that the set of trees associated to (bounded width, binary) tree decompositions of $S$-minor-free graphs is explicitly regular, though we omit a detailed construction. This fact may also be seen by appealing to the tree automaton construction of Courcelle \cite{courcelle}, since containing a given graph as a minor is an $\text{MSO}_2$ property. Here we must use the assumption that $\Forb_\preceq(S)$ has bounded treewidth so that the $S$-minor-free graphs uniformly admit such tree decompositions. Then Corollary~\ref{cor:tw} follows by using Lemma~\ref{lem:main} to determine if there are any tree decompositions that correspond to $\Forb_{\preceq}(S)$-bad pairs.

\subsection{Bounded cliquewidth families}

Recall that the \textit{cliquewidth} of a graph $G$ is the fewest number of labels needed to construct $G$ by the following operations:
\begin{enumerate}
    \item Introduction of a vertex with a given label $i$.
    \item Disjoint union.
    \item Adding an edge between every vertex of label $i$ and label $j$, for chosen $i\ne j$.
    \item Changing all labels $i$ to $j$.
\end{enumerate}
Similarly to cotrees and tree decompositions, graphs of bounded cliquewidth admit a description by a \textit{parse tree} structure. Here the leaves have $k$ possible labels and correspond to the first kind of operation, and nonleaves have the following possible labels:
\begin{itemize}
    \item $\sqcup$, corresponding to operation (2) above;
    \item $\vee_{i,j}$, corresponding to operation (3) above; and
    \item $i\to j$, corresponding to operation (4) above.
\end{itemize}

It is the case that the set of parse trees of $S$-free graphs with cliquewidth at most $k$ is an explicitly regular language for any finite set of graphs $S$, though once again we omit a detailed construction. As in the case of bounded treewidth families, this may immediately be seen by appealing to Courcelle's theorem for bounded cliquewidth, using the fact that containing a given graph as an induced subgraph is $\text{MSO}_1$ \cite{courcelle2}. Corollary~\ref{cor:cw} then follows from Lemma~\ref{lem:main}.

Note that bounded treewidth families also have bounded cliquewidth, so this implies that it is also decidable to determine if a bounded treewidth family has JEP under the induced subgraph relation.

\section{Further discussion}
\label{sec:more}

If $(\Fc,\le)$ is wqo, then by a similar argument to the one used in proof of Proposition~\ref{prop:implicit}, there are only ever finitely many minimal (under $\le$) $\Fc$-bad pairs. One might hope to use more precise information about the ordering to get a computable upper bound on the size any such bad pair. Of course, $\Fc$ might not have any reasonable natural notion of ``size''. For instance, infinite trees are still wqo under topological containment \cite{infinitetree}. In this case, however, Turing machines cannot even read the relevant input in finite time, so it is clear that this case must be excluded somehow if we wish to generalize the results in this paper to other wqo families.

If $(\Fc,\le)$ is wqo and there was a suitable way to define automata on members of $\Fc$, then the proof idea of Proposition~\ref{prop:implicit} would likely generalize to show that the set of $L$-bad pairs is (implicitly) ``regular'' for any ``regular $\Fc$-language'' $L$. To be more precise, the properties required for a notion of regularity to work in this sense are:
\begin{enumerate}
    \item The set $\Forb_\le(S)$ is regular for all $S$.
    \item Regularity is closed under union, intersection, and complement.
    \item It is decidable to determine if a given regular language is empty or not.
    \item $(\Fc^2, \le_2)$ also has a similar generalization of ``regular'', where $(X_1,Y_1)\le_2(X_2,Y_2)$ if $X_1\le X_2$ and $Y_1\le Y_2$.
\end{enumerate}
In property (3), the meaning of word ``given'' is related to the difference between implicitly and explicitly regular language families; in the case of regular string languages for example, the information that should be provided is a finite automaton recognizing the language in question (though one could also provide a regular expression or set of production rules and the problem remains decidable). Property (4) is really just stating that there should be an analogue of the $\#$ operation. Tree automata have all of these properties in the case $(\Fc,\le)=(\Tc_k,\preceq)$. On the other hand, the author is not aware of any such notion of automaton when $(\Fc,\le)=(\Gc,\preceq)$, the set of graphs ordered by the minor relation.

Another possible idea for proving Lemma~\ref{lem:main2}(or~\ref{lem:main}), which we did not pursue, is to ``inline'' a proof of Kruskal's theorem in order to find an explicit (computable) bound on the size of a minimal bad pair. If this idea works, then one could hope more generally to turn a proof that $(\Fc,\le)$ is wqo into a proof that it is decidable to determine if $\Forb_\le(S)$ has JEP. One reason to be skeptical that this approach wouldwork is that the bounds we obtain in the proof of Lemma~\ref{lem:main2} seem to be of a different shape than sorts of functions obtained by exploiting the fact that trees are wqo under topological containment, e.g.\ Friedman's tree and TREE functions.

Due to these challenges, we have extremely low confidence in Conjecture~\ref{conj:wqo} as stated. We are really asking if there are mild assumptions on a wqo set that make JEP decidable.

On the other hand, we suspect Corollary~\ref{cor:p4free} is sharp in the following sense:
\begin{conjecture}\label{conj:oneforbidden}
Let $H$ be a graph. Then Problem~\ref{prob:jep} is decidable for all finite $S$ with $H\in S$ if and only if $H\subseteq P_4$.
\end{conjecture}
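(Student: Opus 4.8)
The plan is to treat the two directions separately. For the ``if'' direction, suppose $H\subseteq P_4$. If $P_4\le G$ for a graph $G$ then $H\le P_4\le G$, so every $H$-free graph is a cograph; hence if $H\in S$ then $\Forb_\subseteq(S)=\Forb_\subseteq(S\cup\{P_4\})$, and since $P_4\in S\cup\{P_4\}$, Corollary~\ref{cor:p4free} decides whether $\Forb_\subseteq(S)$ has JEP. So this direction is immediate from Corollary~\ref{cor:p4free}.

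For the ``only if'' direction we must show that if $H\not\subseteq P_4$ then some finite $S\ni H$ gives undecidable JEP. I would first classify the graphs minimal (under the induced-subgraph order) with $H\not\subseteq P_4$. Deleting a vertex from such an $H$ leaves a graph $\subseteq P_4$, so $H$ has at most five vertices, and a short case analysis---with the five-vertex case using that $P_4$ is the only four-vertex induced subgraph of $P_4$, which forces a minimal obstruction on five vertices to be $2$-regular---shows the list is exactly $K_3, 3K_1, C_4, 2K_2, C_5$. Given arbitrary $H\not\subseteq P_4$, pick a minimal $H_0\le H$ from this list and a finite $S_0\ni H_0$ with $\Forb_\subseteq(S_0)$ having undecidable JEP; since $H_0\in S_0$ forces every graph in $\Forb_\subseteq(S_0)$ to be $H_0$-free, hence $H$-free, we get $\Forb_\subseteq(S_0\cup\{H\})=\Forb_\subseteq(S_0)$, so $S=S_0\cup\{H\}$ works. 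As complementation preserves JEP and the induced-subgraph order and swaps $K_3\leftrightarrow 3K_1$ and $C_4\leftrightarrow 2K_2$, it remains to produce such an $S_0$ for $H_0\in\{K_3, C_4, C_5\}$.

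For each of these, the plan is to revisit the undecidability construction of~\cite{undecidable}, which from a Turing machine $T$ produces a finite $S_T$ with $\Forb_\subseteq(S_T)$ having JEP if and only if $T$ does not halt (or a similar equivalence), and to re-engineer it to live inside $H_0$-free graphs. Concretely, I would isolate the fixed family of ``gadget'' graphs from which the members of $\Forb_\subseteq(S_T)$ are assembled and apply a uniform local substitution $G\mapsto G^{\ast}$ that destroys every copy of $H_0$ while preserving the macroscopic structure: for $H_0=K_3$, a bipartization such as subdividing each edge once or a bipartite-double-cover-type construction, both of which kill all triangles; for $H_0=C_4$ and $H_0=C_5$, analogous substitutions chosen to prevent short induced cycles. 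One then checks that $G\mapsto G^{\ast}$ induces a poset embedding of $\Forb_\subseteq(S_T)$ into $\Forb_\subseteq(S_T')$ for a new \emph{finite} set $S_T'\ni H_0$ that carries bad pairs to bad pairs and non-bad pairs to non-bad pairs, so that $\Forb_\subseteq(S_T')$ has JEP exactly when $\Forb_\subseteq(S_T)$ does.

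The main obstacle is this last step: arranging the substitution so that (i) $S_T'$ stays finite, so that one genuinely remains within the scope of Problem~\ref{prob:jep}, and (ii) the ``no common induced extension of two computation-encoding graphs'' phenomenon that drives the undecidability survives---in particular controlling the ``garbage'' graphs of $\Forb_\subseteq(S_T')$ that are not of the form $G^{\ast}$. I expect $H_0=2K_2$ (equivalently $C_4$) to be hardest, since $2K_2$-free graphs are structurally quite restricted, so one may instead need to start the reduction from an undecidable problem admitting a sparser or more cycle-free encoding. As a consistency check on the conjecture I would also verify that $H_0$-free graphs have unbounded clique-width for each of the five minimal $H_0$, since otherwise Corollary~\ref{cor:cw} would already render the problem decidable.
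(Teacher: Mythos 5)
This statement is a \emph{conjecture} in the paper, not a theorem: the paper explicitly states after it that ``This paper proved the `if' direction,'' and the ``only if'' direction is left open. So a complete proof of the biconditional is not something the paper has, and your proposal should be judged against that fact.

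Your ``if'' direction is correct and is essentially the argument the paper has in mind: if $H\subseteq P_4$ then $H$-free graphs are automatically $P_4$-free, so $\Forb_\subseteq(S)=\Forb_\subseteq(S\cup\{P_4\})$ and Corollary~\ref{cor:p4free} applies. Your reduction of the ``only if'' direction to the five minimal obstructions $K_3, 3K_1, 2K_2, C_4, C_5$ is also correct (the five-vertex case is indeed forced to be $C_5$), and using complementation to cut the list down to three is a sound observation, since complementation preserves the induced-subgraph order and JEP.

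However, the ``only if'' direction you outline is not a proof but a research plan, and you acknowledge this yourself. The genuine gap is precisely where you flag it: you propose to take the undecidability construction of \cite{undecidable}, apply a local substitution $G\mapsto G^{\ast}$ that destroys all copies of $H_0$, and argue that this yields a finite forbidden set $S_T'$ whose class has JEP if and only if the original does. But you give no construction of $S_T'$, no argument that it can be taken finite, and no control over the ``garbage'' graphs in $\Forb_\subseteq(S_T')$ that are not in the image of $G\mapsto G^{\ast}$---and it is exactly these garbage graphs that typically reintroduce joint embeddings and break such reductions. There is no known way to do this; if there were, the paper would state this as a theorem rather than a conjecture. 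So the ``only if'' half remains unproved, which is exactly the status the paper assigns to it.

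One small technical caveat on your consistency check: for Corollary~\ref{cor:cw} to render the problem decidable one needs an explicit cliquewidth bound $k$ supplied as input, so ``$\Forb_\subseteq(H_0)$ has unbounded cliquewidth'' is the right thing to check only insofar as a \emph{known} bound would trivialize the problem. This does hold for all five $H_0$ (e.g., $K_3$-free and $3K_1$-free graphs already have unbounded cliquewidth), so the sanity check passes, but it is worth phrasing carefully.
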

This paper proved the ``if'' direction. Note as well that $H\subseteq P_4$ if and only if $\Forb_\subseteq(H)$ is wqo under $\subseteq$. We also conjecture the following, which is weaker than Conjecture~\ref{conj:wqo}:
\begin{conjecture}\label{conj:manyforbidden}
Let $S'$ be a finite set of graphs. Then Problem~\ref{prob:jep} is decidable for all finite $S$ with $S'\subseteq S$ if $\Forb_\subseteq(S')$ is wqo by $\subseteq$.
\end{conjecture}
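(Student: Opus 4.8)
The plan is to reduce Conjecture~\ref{conj:manyforbidden} to Corollary~\ref{cor:cw}. Fix the set $S'$ with $\Forb_\subseteq(S')$ wqo under $\subseteq$, and let $S$ be any finite set with $S'\subseteq S$. Every $S$-free graph is $S'$-free, so $\Forb_\subseteq(S)\subseteq\Forb_\subseteq(S')$; in particular $\Forb_\subseteq(S)$ is a hereditary class which is again wqo under $\subseteq$, since a subposet of a wqo poset is wqo. The one external fact I would want is a single integer $k$ such that every graph in $\Forb_\subseteq(S')$ has cliquewidth at most $k$. Granting such a $k$, I would have $\Forb_\subseteq(S)\subseteq\Forb_\subseteq(S')\subseteq\{\text{graphs of cliquewidth}\le k\}$, hence $\Forb_\subseteq(S)$ is exactly the set of $S$-free graphs of cliquewidth at most $k$; Corollary~\ref{cor:cw}, applied with this $k$ and input $S$, then decides whether $\Forb_\subseteq(S)$ has JEP. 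Crucially, $k$ depends only on the fixed parameter $S'$ and not on the instance $S$, so it is irrelevant to \emph{decidability} whether we can compute $k$ from $S'$: the required algorithm may simply have the correct value of $k$ hard-coded. The entire argument thus rests only on the \emph{existence} of such a $k$.

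That existence statement is precisely the conjecture of Daligault, Rao, and Thomass\'e: every hereditary class of graphs that is well-quasi-ordered by the induced subgraph relation has bounded cliquewidth. So Conjecture~\ref{conj:manyforbidden} would follow at once from that conjecture together with Corollary~\ref{cor:cw} --- and in fact only its special case for classes defined by \emph{finitely many} forbidden induced subgraphs is needed. This route is consistent with the proof of Corollary~\ref{cor:p4free}, which is the instance $S'=\{P_4\}$: cographs are wqo and have cliquewidth $2$, and it is exactly that bounded-cliquewidth fact (packaged through cotrees there, but available through Corollary~\ref{cor:cw} in general) that makes the argument run. Likewise, any verified case of the Daligault--Rao--Thomass\'e conjecture --- say, for classes with a bounded number of forbidden subgraphs --- would immediately yield the corresponding special case of Conjecture~\ref{conj:manyforbidden}.

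The main obstacle is therefore the Daligault--Rao--Thomass\'e conjecture itself, which is unresolved; moreover there are constructions suggesting that the fully general hereditary version may fail, which would leave the finitely-defined case (the one we need) as the genuine open problem. I would not expect the techniques of this paper to sidestep it: the proof of Corollary~\ref{cor:cw}, and the whole ``regular language of parse trees'' strategy, attaches a tree automaton to the class, and for graphs under the induced subgraph relation the only such model we have takes bounded cliquewidth as a hypothesis. So the realistic outcome is a conditional theorem, plus unconditional confirmation in whatever cases Daligault--Rao--Thomass\'e is known. A fully unconditional proof would seem to require either an automaton-theoretic description of arbitrary wqo hereditary graph classes or a direct combinatorial bound on the size of a minimal $\Forb_\subseteq(S)$-bad pair distilled from a proof of the relevant wqo; the discussion in Section~\ref{sec:more} suggests the latter is unlikely to come easily.
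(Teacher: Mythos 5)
The statement is an open conjecture, and the paper does not prove it; it only observes in Section~\ref{sec:more} that the Daligault--Rao--Thomass\'e conjecture (that $\Forb_\subseteq(S')$ wqo by $\subseteq$ for finite $S'$ implies bounded cliquewidth) would imply Conjecture~\ref{conj:manyforbidden} via Corollary~\ref{cor:cw}. Your conditional reduction is precisely that observation, and your additional point that the bound $k$ depends only on the fixed $S'$ and may therefore be hard-coded into the algorithm --- so its non-computability from $S'$ is irrelevant to the decidability claim --- correctly dispels the one natural objection to the reduction.
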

Note that the converse to this statement is false, as we now describe. Recall that it is decidable to determine if bounded treewidth families have JEP under the induced subgraph relation. Let $S'$ be the set of graphs on four vertices where at least one vertex has degree three. Then $\Forb_\subseteq(S')$ is precisely the set of graphs with maximum degree at most 2, i.e.\ disjoint unions of paths and cycles. This class has bounded treewidth, so it is decidable to determine if $\Forb_\subseteq(S)$ has JEP for any finite $S$ containing $S'$.\footnote{It is also easy to see this without considering treewidth, since in all cases the $S$-free graphs admit a fairly simple description.} On the other hand, $\Forb_\subseteq(S')$ is not wqo by $\subseteq$ because cycles form an infinite antichain.

In \cite{conj}, it was conjectured that if $\Forb_\subseteq(S')$ is wqo by $\subseteq$, where $S'$ is finite, then this family also has bounded cliquewidth. This conjecture implies Conjecture~\ref{conj:manyforbidden} as a consequence of Corollary~\ref{cor:cw}.

\section{Acknowledgements}

We thank Nicolas Trotignon for bringing this problem to our attention.

\printbibliography

\end{document}